\definecolor{dark-gray}{gray}{0.3}
\definecolor{dkgray}{rgb}{.4,.4,.4}
\definecolor{dkblue}{rgb}{0,0,.5}
\definecolor{medblue}{rgb}{0,0,.75}
\definecolor{rust}{rgb}{0.5,0.1,0.1}
\newtheorem{theorem}{Theorem}[section]
\newtheorem{proposition}[theorem]{Proposition}
\newtheorem{fact}[theorem]{Fact}
\newtheorem{corollary}[theorem]{Corollary}
\theoremstyle{definition}
\newtheorem{definition}[theorem]{Definition}
\newtheorem{example}[theorem]{Example}
\newtheorem{remark}[theorem]{Remark}
\newcommand{\term}{\emph}
\numberwithin{equation}{section} 
\numberwithin{figure}{section}
\numberwithin{table}{section}
\numberwithin{recipe}{section}
\providecommand{\mathbold}[1]{\bm{#1}}
\renewcommand{\phi}{\varphi}
\newcommand{\econst}{\mathrm{e}}
\providecommand{\mathbbm}{\mathbb} 
\newcommand{\R}{\mathbbm{R}}
\newcommand{\abs}[1]{\left\vert {#1} \right\vert}
\newcommand{\diff}[1]{\mathrm{d}{#1}}
\newcommand{\idiff}[1]{\, \diff{#1}}
\newcommand{\Prob}[1]{\mathbbm{P}\left\{{#1}\right\}}
\newcommand{\Expect}{\operatorname{\mathbb{E}}}
\DeclareMathOperator{\Var}{Var}
\newcommand{\vct}[1]{\mathbold{#1}}
\newcommand{\mtx}[1]{\mathbold{#1}}
\newcommand{\norm}[1]{\left\Vert {#1} \right\Vert}
\DeclareMathOperator{\dist}{dist}
\title[Concentration of Intrinsic Volumes]{Concentration of the Intrinsic Volumes of a Convex Body}
\author[M.~Lotz, M.~B.~McCoy, I.~Nourdin, G.~Peccati, and J.~A.~Tropp]{Martin Lotz, Michael B.~McCoy, Ivan Nourdin, Giovanni Peccati, and Joel~A.~Tropp}
\date{28 October 2018.  Revised: 19 December 2018, 15 January 2019, 24 January 2019, and 19 March 2019.}
\subjclass[2010]{Primary: 52A39; 
52A20. 
Secondary: 94A17; 
52A22. 
}
\keywords{Alexandrov--Fenchel inequality, concentration, convex body, entropy, information theory, intrinsic volume, log-concave distribution, quermassintegral, ultra-log-concave sequence.}
\begin{document}

\begin{abstract}
The intrinsic volumes are measures of the content of a convex body.
This paper applies probabilistic and information-theoretic
methods to study the sequence of intrinsic volumes.
The main result states that the intrinsic volume sequence concentrates
sharply around a specific index, called the central intrinsic volume.
Furthermore, among all convex bodies whose central intrinsic volume is fixed,
an appropriately scaled cube has the intrinsic volume sequence with maximum entropy.
\end{abstract}

\maketitle

\section{Introduction and Main Results}

Intrinsic volumes are the fundamental measures of content for a convex body.
Some of the most celebrated results in convex geometry describe the properties
of the intrinsic volumes and their interrelationships.
In this paper,
we identify several new properties of the sequence of intrinsic volumes by exploiting
recent results from information theory and geometric functional analysis.
In particular, we establish that the mass of the intrinsic volume sequence
concentrates sharply around a specific index, which we call
the \term{central intrinsic volume}.  We also demonstrate that a
scaled cube has the maximum-entropy distribution of intrinsic volumes
among all convex bodies with a fixed central intrinsic volume.

\subsection{Convex Bodies and Volume}

%Throughout this paper, $n$ denotes a fixed natural number.

For each natural number $m$, the Euclidean space $\R^m$
is equipped with the $\ell_2$ norm $\norm{\cdot}$,
the associated inner product, and the canonical orthonormal basis.
The origin of $\R^m$ is written as $\vct{0}_m$.

Throughout the paper, $n$ denotes a fixed natural number.
A \term{convex body} in $\R^n$ is a compact and convex subset, possibly empty.
Throughout this paper, $\mathsf{K}$ will denote a \emph{nonempty} convex body in $\R^n$. 
The dimension of the convex body, $\dim \mathsf{K}$, is the dimension of the affine hull of $\mathsf{K}$;
the dimension takes values in the range $\{0, 1, 2, \dots, n\}$.
When $\mathsf{K}$ has dimension $j$, we define the $j$-dimensional volume
$\mathrm{Vol}_j(\mathsf{K})$ to be the Lebesgue measure of $\mathsf{K}$,
computed relative to its affine hull.  If $\mathsf{K}$ is $0$-dimensional (i.e., a single point),
then $\mathrm{Vol}_0(\mathsf{K}) = 1$.

For sets $\mathsf{C} \subset \R^n$ and $\mathsf{D} \subset \R^m$,
we define the \emph{orthogonal} direct product
$$
\mathsf{C} \times \mathsf{D} := \{ (\vct{x}, \vct{y}) \in \R^{n + m} : \text{$\vct{x} \in \mathsf{C}$ and $\vct{y} \in \mathsf{D}$} \}.
$$
To be precise, the concatenation $(\vct{x}, \vct{y}) \in \R^{n + m}$ places $\vct{x} \in \R^n$
in the first $n$ coordinates and $\vct{y} \in \R^m$ in the remaining $(n - m)$ coordinates.
In particular, $\mathsf{K} \times \{ \vct{0}_m \}$
is the natural embedding of $\mathsf{K}$ into $\R^{n + m}$.

Several convex bodies merit special notation.
The unit-volume cube is the set $\mathsf{Q}_n := [0, 1]^n \subset \R^n$.
We write $\mathsf{B}_n := \{ \vct{x} \in \R^n : \norm{\vct{x}} \leq 1 \}$ for the Euclidean unit ball.
The volume $\kappa_n$ and the surface area $\omega_n$
of the Euclidean ball are given by the formulas
\begin{equation} \label{eqn:ball-vol}
\kappa_n := \mathrm{Vol}_n(\mathsf{B}_n) = \frac{\pi^{n/2}}{\Gamma(1 + n/2)}
\quad\text{and}\quad
\omega_n := n \kappa_n = \frac{2\pi^{n/2}}{\Gamma(n/2)}.
\end{equation}
As usual, $\Gamma$ denotes the gamma function.

\subsection{The Intrinsic Volumes}

In this section, we introduce the intrinsic volumes, their properties,
and connections to other geometric functionals.  A good reference
for this material is~\cite{Sch14:Convex-Bodies}.  Intrinsic volumes
are basic tools in stochastic and integral geometry~\cite{SW08:Stochastic-Integral},
and they appear in the study of random fields~\cite{AT07:Random-Fields}.

We begin with a geometrically intuitive definition. 

\begin{definition}[Intrinsic Volumes] \label{def:intvol}
For each index $j = 0, 1, 2, \dots, n$, let $\mtx{P}_j \in \R^{n \times n}$
be the orthogonal projector onto a fixed $j$-dimensional subspace of $\R^n$.
Draw a rotation matrix $\mtx{Q} \in \R^{n \times n}$
uniformly at random (from the Haar measure on the compact, homogeneous group
of $n \times n$ orthogonal matrices with determinant one).
The \emph{intrinsic volumes} of the nonempty convex body $\mathsf{K} \subset \R^n$ are the quantities
\begin{equation} \label{eqn:intvols}
V_j(\mathsf{K}) := {n \choose j} \frac{\kappa_n}{\kappa_j \kappa_{n-j}}
	\Expect_{\mtx{Q}} \big[ \mathrm{Vol}_j( \mtx{P}_j \mtx{Q} \mathsf{K} ) \big].
\end{equation}
We write $\Expect$ for expectation and $\Expect_{X}$ for expectation with respect
to a specific random variable $X$.
The intrinsic volumes of the empty set are identically zero: $V_j(\emptyset) = 0$
for each index $j$.  
\end{definition}

Up to scaling, the $j$th intrinsic volume is the average volume of a projection of the convex
body onto a $j$-dimensional subspace, chosen uniformly at random.
Following Federer~\cite{Fed59:Curvature-Measures},
we have chosen the normalization in~\eqref{eqn:intvols}
to remove the dependence on the dimension in which the
convex body is embedded.
McMullen~\cite{McM75:Nonlinear-Angle-Sum} introduced the term
``intrinsic volumes''.  In her work, Chevet~\cite{Che76:Processus-Gaussiens}
called $V_j$ the \emph{$j$-i{\`e}me {\'e}paisseur} or the ``$j$th thickness''.

\begin{example}[The Euclidean Ball] \label{ex:ball-iv}
We can easily calculate the intrinsic volumes of the Euclidean unit ball
because each projection is simply a Euclidean unit ball of lower dimension.
Thus,
$$
V_j(\mathsf{B}_n) = {n \choose j} \frac{\kappa_n}{\kappa_{n-j}}
\quad\text{for $j = 0, 1, 2, \dots, n$.}
$$
\end{example}

\begin{example}[The Cube] \label{ex:cube-iv}
We can also determine the intrinsic volumes of a cube:
$$
V_j( \mathsf{Q}_n ) = {n \choose j} 
\quad\text{for $j = 0, 1, 2, \dots, n$.}
$$
See Section~\ref{sec:parallelotope} for the details of the calculation.
A classic reference is~\cite[pp.~224--227]{San04:Integral-Geometry}.
\end{example}

\subsubsection{Geometric Functionals}

The intrinsic volumes are closely related to familiar geometric functionals.
The intrinsic volume $V_0$ is called the \emph{Euler characteristic};
it takes the value zero for the empty set and the value one for each nonempty convex body.
The intrinsic volume $V_1$ is proportional to the \emph{mean width}, 
scaled so that $V_1([0,1] \times \{ \mathbf{0}_{n-1} \}) = 1$.
Meanwhile, $V_{n-1}$ is half the surface area, and $V_n$ coincides with the
ordinary volume measure, $\mathrm{Vol}_n$.

\subsubsection{Properties}
\label{sec:iv-props}

The intrinsic volumes satisfy many important properties.  Let $\mathsf{C}, \mathsf{K} \subset \R^n$
be nonempty convex bodies.  For each index $j = 0,1,2, \dots, n$,
the intrinsic volume $V_j$ is...

\begin{enumerate}
\item	\textbf{Nonnegative:}  
$V_j(\mathsf{K}) \geq 0$.

\item	\textbf{Monotone:}  
$\mathsf{C} \subset \mathsf{K}$ implies $V_j(\mathsf{C}) \leq V_j(\mathsf{K})$.

\item	\textbf{Homogeneous:}  
$V_j(\lambda \mathsf{K}) = \lambda^j V_j(\mathsf{K})$ for each $\lambda \geq 0$.

\item	\textbf{Invariant:}  
$V_j(\mtx{T} \mathsf{K}) = V_j(\mathsf{K})$ for each \emph{proper rigid motion} $\mtx{T}$.
That is, $\mtx{T}$ acts by rotation and translation.

\item	\textbf{Intrinsic:}  
$V_j(\mathsf{K}) = V_j(\mathsf{K} \times \{ \mathbold{0}_m \})$ for each natural number $m$.

\item	\textbf{A Valuation:}  $V_j(\emptyset) = 0$.  If $\mathsf{C} \cup \mathsf{K}$ is also a convex body, then
$$
V_j( \mathsf{C} \cap \mathsf{K} ) + V_j( \mathsf{C} \cup \mathsf{K} )
	= V_j(\mathsf{C}) + V_j(\mathsf{K}).
$$

\item	\textbf{Continuous:}  If $\mathsf{K}_m \to \mathsf{K}$ in the Hausdorff metric, then
$V_j(\mathsf{K}_m) \to V_j(\mathsf{K})$.
\end{enumerate}

\noindent
With sufficient energy, one may derive all of these facts directly
from Definition~\ref{def:intvol}.
See the books~\cite{KR97:Introduction-Geometric,San04:Integral-Geometry,Gru07:Convex-Discrete,
SW08:Stochastic-Integral,Sch14:Convex-Bodies}
for further information about intrinsic volumes and related matters.

\subsubsection{Hadwiger's Characterization Theorems}

Hadwiger~\cite{Had51:Funktionalsatzes,Had52:Additive-Funktionale,Had57:Vorlesungen}
proved several wonderful theorems that characterize the intrinsic volumes.
To state these results, we need a short definition.
A valuation $F$ on $\R^n$ is \emph{simple}
if $F(\mathsf{K}) = 0$ whenever $\dim \mathsf{K} < n$.

\begin{fact}[Uniqueness of Volume]
Suppose that $F$ is a simple, invariant, continuous valuation on convex bodies in $\R^n$.
Then $F$ is a scalar multiple of the intrinsic volume $V_n$.
\end{fact}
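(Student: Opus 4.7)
The plan is to first pin down $F$ on cubes, extend by rotation invariance and continuity to all rectangular parallelepipeds, and finally push to general convex bodies by polytopal approximation.

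First I would set $c := F([0,1]^n)$. For each positive integer $k$, tile $[0,1]^n$ by $k^n$ congruent axis-aligned subcubes of side $1/k$ with pairwise disjoint interiors. Every pairwise intersection has dimension at most $n-1$, so simplicity kills the intersection term in the valuation identity. Iterating the identity yields $F([0,1]^n) = \sum_i F(C_i)$, and translation invariance makes every $F(C_i)$ equal, so $F([0,1/k]^n) = c/k^n$. A common-denominator tiling then extends this to $F(B) = c \cdot \mathrm{Vol}_n(B)$ for every axis-aligned box $B$ with rational side lengths. Continuity under Hausdorff convergence lifts the identity to arbitrary axis-aligned boxes, and rotation invariance then gives $F(B) = c \cdot \mathrm{Vol}_n(B)$ for every rectangular parallelepiped $B$.

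Next I would pass to the difference $G := F - c \, V_n$. Since $V_n$ is itself a simple, invariant, continuous valuation, so is $G$, and by the previous step $G$ vanishes on every rectangular parallelepiped. Because polytopes are Hausdorff-dense in the space of convex bodies and $G$ is continuous, it suffices to show $G(P) = 0$ for every convex polytope $P$. Triangulating $P$ into $n$-simplices with disjoint interiors and applying the valuation property together with simplicity (to annihilate the lower-dimensional intersection terms), the problem reduces to showing that $G$ vanishes on every $n$-simplex.

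The main obstacle lies precisely in this reduction to simplices: by Dehn's resolution of Hilbert's third problem there is in general no way to cut a simplex into pieces that reassemble into a box, so one cannot conclude directly from the valuation property and the vanishing of $G$ on boxes. To circumvent this, I would invoke Hadwiger's inductive argument: erect a right prism over a chosen facet of the simplex $\Delta$, compare $G(\Delta)$ with $G$ of the prism (which vanishes by the box case), and exploit the valuation property together with continuity to transfer information between the simplex, the prism, and their union. Carrying out this bookkeeping is the technical crux, but once it is complete, triangulation plus continuity of $G$ deliver $G \equiv 0$ on every convex body, showing $F = c \cdot V_n$.
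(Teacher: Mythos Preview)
The paper does not prove this statement; it is recorded as a classical Fact with references to Hadwiger's original work, so there is no proof in the paper to compare against. Your sketch follows the standard Hadwiger route, and the first two stages are essentially correct: the tiling argument pins $F$ down on boxes (with the usual caveat that ``iterating the valuation identity'' across a many-piece dissection needs either Groemer's extension to the polyconvex ring or a one-piece-at-a-time argument, since the two-body identity applies only when the union is convex), and density plus triangulation reduce the problem to simplices, subject to the same caveat.

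The genuine gap is in the final step. The right prism over a facet of an $n$-simplex is an $(n-1)$-simplex crossed with an interval, and for $n\geq 3$ this is \emph{not} a rectangular parallelepiped; so $G$ does not vanish on it ``by the box case'' as you write. What Hadwiger's induction actually does is lower the dimension: for an $(n-1)$-dimensional convex body $K$, the map $h\mapsto G(K\times[0,h])$ is additive in $h$ (by the valuation property, translation invariance, and simplicity) and continuous, hence linear, say $G(K\times[0,h])=h\,\alpha(K)$; one checks that $\alpha$ is itself a simple, invariant, continuous valuation on $\R^{n-1}$ with $\alpha([0,1]^{n-1})=G([0,1]^n)=0$, and the inductive hypothesis forces $\alpha\equiv 0$. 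This shows $G$ vanishes on all right cylinders, including simplicial prisms. Even after this correction you still owe an argument passing from cylinders to a general simplex --- this is exactly where continuity is used to sidestep the Dehn obstruction you correctly identified --- and that piece is not present in your sketch beyond the phrase ``carrying out this bookkeeping.''
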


\begin{fact}[The Basis of Intrinsic Volumes]
Suppose that $F$ is an invariant, continuous valuation on convex bodies in $\R^n$.
Then $F$ is a linear combination of the intrinsic volumes $V_0, V_1, V_2, \dots, V_n$.
\end{fact}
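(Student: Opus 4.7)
The plan is to prove the Basis Theorem by induction on the ambient dimension $n$, with the Uniqueness of Volume fact playing the role of the key lever at each step. The intuition is that a general invariant continuous valuation $F$ may be ``peeled'' one dimension at a time: the contributions from $V_0, \ldots, V_{n-1}$ are detected by restricting $F$ to lower-dimensional bodies, and what remains after subtracting these contributions must be simple, hence a multiple of $V_n$.

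For the base case $n = 0$, a convex body is either empty or a single point, so any continuous valuation with $F(\emptyset) = 0$ satisfies $F = F(\{\vct{0}_0\}) \cdot V_0$, which already has the required form. For the inductive step, I would assume the theorem for all ambient dimensions strictly less than $n$, let $F$ be an invariant continuous valuation on convex bodies in $\R^n$, and fix a hyperplane $\mathsf{H} \subset \R^n$, identified isometrically with $\R^{n-1}$. The restriction $F|_{\mathsf{H}}$ is again an invariant continuous valuation on convex bodies inside $\mathsf{H}$, so by the inductive hypothesis there are constants $c_0, \ldots, c_{n-1}$ with
\begin{equation}
F(\mathsf{K}) = \sum_{j=0}^{n-1} c_j V_j^{\mathsf{H}}(\mathsf{K}) \qquad \text{for every convex body } \mathsf{K} \subset \mathsf{H},
\end{equation}
where $V_j^{\mathsf{H}}$ denotes the $j$th intrinsic volume computed intrinsically in $\mathsf{H}$. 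By the Intrinsic property of Section~\ref{sec:iv-props}, $V_j^{\mathsf{H}}(\mathsf{K}) = V_j(\mathsf{K})$ for every $\mathsf{K} \subset \mathsf{H}$. Next, any convex body $\mathsf{L}$ with $\dim \mathsf{L} < n$ can be carried into $\mathsf{H}$ by a proper rigid motion; invariance of both $F$ and each $V_j$ then upgrades the displayed identity so that $F(\mathsf{L}) = \sum_{j=0}^{n-1} c_j V_j(\mathsf{L})$ for every lower-dimensional convex body.

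Now I would form the difference $G \defeq F - \sum_{j=0}^{n-1} c_j V_j$. This $G$ is manifestly invariant and continuous, and it is a valuation since each $V_j$ is and valuations form a vector space. By the previous paragraph, $G(\mathsf{L}) = 0$ for every $\mathsf{L}$ with $\dim \mathsf{L} < n$; in particular $G$ is a simple valuation on $\R^n$. The Uniqueness of Volume fact then forces $G = c_n V_n$ for some constant $c_n$, and rearranging gives $F = \sum_{j=0}^{n} c_j V_j$, completing the induction.

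The step I expect to require most care is the transfer from ``$F$ agrees with a linear combination of intrinsic volumes on bodies inside the fixed hyperplane $\mathsf{H}$'' to ``the \emph{same} coefficients work for every lower-dimensional body in $\R^n$''. This needs two ingredients used in concert: the Intrinsic property, which guarantees that $V_j^{\mathsf{H}}$ and $V_j$ coincide on subsets of $\mathsf{H}$ and hence that the inductive hypothesis genuinely produces constants attached to the ambient functionals $V_j$; and the Invariance of both $F$ and the $V_j$ under proper rigid motions, which lets one slide any lower-dimensional body into $\mathsf{H}$ without changing either side. Once these are in hand, the step producing $G$ and the appeal to the simple-valuation characterization is essentially formal.
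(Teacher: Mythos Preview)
Your argument is correct and is essentially the classical inductive proof of Hadwiger's theorem. However, the paper does not prove this statement: it is recorded as a \emph{Fact} and attributed to Hadwiger with citations, so there is no ``paper's own proof'' to compare against. Your induction-on-dimension scheme, with the Uniqueness of Volume fact used to identify the residual simple valuation $G$ as a multiple of $V_n$, is exactly the standard route (as in Klain's simplification of Hadwiger's original argument), and the two points you flag as delicate---the intrinsicality of $V_j$ and the transport of lower-dimensional bodies into a fixed hyperplane by proper rigid motions---are indeed the places where one must be careful.
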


Together, these theorems demonstrate the fundamental importance
of intrinsic volumes in convex geometry.  They also construct a bridge to
the field of integral geometry, which provides explicit
formulas for geometric functionals defined by integrating over
geometric groups (e.g., the family of proper rigid motions).

\subsubsection{Quermassintegrals}

With a different normalization, the mean projection volume appearing in~\eqref{eqn:intvols}
is also known as a \emph{quermassintegral}. 
The relationship between the quermassintegrals and the intrinsic volumes is
\begin{equation} \label{eqn:quermassintegral}
{n \choose j} W_j^{(n)}(\mathsf{K}) := \kappa_{j} V_{n-j}(\mathsf{K})
\quad\text{for $j = 0, 1, 2, \dots, n$.}
\end{equation}
The notation reflects the fact that the quermassintegral $W^{(n)}_j$
depends on the ambient dimension $n$, while the intrinsic volume does not.

\subsection{The Intrinsic Volume Random Variable}

In view of Example~\ref{ex:cube-iv}, we see that the intrinsic volume sequence
of the cube $\mathsf{Q}_n$ is sharply peaked (around index $n/2$).
Example~\ref{ex:ball-iv} shows that intrinsic volumes of the Euclidean ball
$\mathsf{B}_n$ drop off quickly (starting around index $\sqrt{2\pi n}$).
This observation motivates us to ask whether the intrinsic volumes of
a general convex body also exhibit some type of concentration.

It is natural to apply probabilistic methods to address this question.
To that end, we first need to normalize the intrinsic volumes to construct
a probability distribution.

\begin{definition}[Normalized Intrinsic Volumes] \label{def:wills}
The \emph{total intrinsic volume} of the convex body $\mathsf{K}$,
also known as the \emph{Wills functional}~\cite{Wil73:Gitterpunktanzahl,Had75:Willssche,McM75:Nonlinear-Angle-Sum},
is the quantity
\begin{equation} \label{eqn:wills}
W(\mathsf{K}) := \sum_{j=0}^n V_j(\mathsf{K}).
\end{equation}
The \emph{normalized intrinsic volumes} compose the sequence
$$
\tilde{V}_j(\mathsf{K}) := \frac{V_j(\mathsf{K})}{W(\mathsf{K})}
\quad\text{for $j = 0, 1, 2, \dots, n$.}
$$
In particular, the sequence $\{ \tilde{V}_j(\mathsf{K}) : j = 0, 1, 2, \dots, n \}$
forms a probability distribution.
\end{definition}

\noindent
In spite of the similarity of notation, the total intrinsic volume $W$
should \emph{not} be confused with a quermassintegral.

We may now construct a random variable that reflects the distribution of
the intrinsic volumes of a convex body.

\begin{definition}[Intrinsic Volume Random Variable] \label{def:intvol-rv}
The \emph{intrinsic volume random variable} $Z_{\mathsf{K}}$
associated with a convex body $\mathsf{K}$
takes nonnegative integer values according to the distribution
\begin{equation} \label{eqn:ZK}
\Prob{ Z_{\mathsf{K}} = j } = \tilde{V}_j(\mathsf{K})
\quad\text{for $j = 0, 1, 2, \dots, n$.}
\end{equation}
\end{definition}

The mean of the intrinsic volume random variable plays a special role
in the analysis, so we exalt it with its own name and notation.

\begin{definition}[Central Intrinsic Volume]
The \emph{central intrinsic volume} of the convex body $\mathsf{K}$
is the quantity
\begin{equation} \label{eqn:centroid}
\Delta( \mathsf{K} ) := \Expect Z_{\mathsf{K}}
	= \sum_{j=0}^n j \cdot \tilde{V}_j(\mathsf{K}).
\end{equation}
Equivalently, the central intrinsic volume is the centroid of the
sequence of intrinsic volumes.
\end{definition}

Since the intrinsic volume sequence of a convex body $\mathsf{K} \subset \R^n$
is supported on $\{0, 1, 2, \dots, n\}$, it is immediate that
the central intrinsic volume satisfies $\Delta(\mathsf{K}) \in [0, n]$.
The extreme $n$ is unattainable (because a nonempty convex body
has Euler characteristic $V_0(\mathsf{K}) = 1$).
But it is easy to construct examples that achieve values across
the rest of the range.

\begin{example}[The Scaled Cube]
Fix $s \in [0, \infty)$.  Using Example~\ref{ex:cube-iv}
and the homogeneity of intrinsic volumes,
we see that total intrinsic volume of the scaled cube is
$$
W(s \mathsf{Q}_n) = \sum_{j=0}^n {n \choose j} \cdot s^j = (1+s)^n.
$$
The central intrinsic volume of the scaled cube is
$$
\Delta( s \mathsf{Q}_n ) = \frac{1}{(1+s)^{n}} \sum_{j=0}^n j \cdot {n \choose j} \cdot s^j
	= \sum_{j=0}^n j \cdot {n \choose j} \cdot \left( \frac{s}{1+s} \right)^j \left(1 - \frac{s}{1+s}\right)^{n-j}
	= \frac{ns}{1+s}.
$$
We recognize the mean of the random variable $\textsc{Bin}(s/(1+s), n)$ to reach the last identity.
Note that the quantity $\Delta(s \mathsf{Q}_n) = ns/(1+s)$
sweeps through the interval $[0, n)$ as we vary $s \in [0, \infty)$.
\end{example}

\begin{example}[Large Sets] \label{ex:large-set}
More generally, we can compute the limits of the normalized intrinsic volumes
of a growing set:
$$
\begin{aligned}
\lim_{s\to \infty} \tilde{V}_j(s \mathsf{K}) &\to 0 \quad\text{for $j < \dim \mathsf{K}$}; \\
\lim_{s\to \infty} \tilde{V}_j(s \mathsf{K}) &\to 1 \quad\text{for $j = \dim \mathsf{K}$.}
\end{aligned}
$$
This point follows from the homogeneity of intrinsic volumes, noted in Section~\ref{sec:iv-props}.
\end{example}

\subsection{Concentration of Intrinsic Volumes}

Our main result states that the intrinsic volume random variable
concentrates sharply around the central intrinsic volume.

\begin{theorem}[Concentration of Intrinsic Volumes] \label{thm:intvol-intro}
Let $\mathsf{K} \subset \R^n$ be a nonempty convex body
with intrinsic volume random variable $Z_{\mathsf{K}}$.
The variance satisfies
$$
\Var[ Z_{\mathsf{K}} ] \leq 4n.
$$
Furthermore, in the range $0 \leq t \leq \sqrt{n}$, we have the tail inequality
$$
\Prob{ \abs{Z_{\mathsf{K}} - \Expect Z_{\mathsf{K}}} \geq t \sqrt{n} }
	\leq 2 \econst^{-3t^2/28}.
$$
\end{theorem}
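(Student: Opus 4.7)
The plan is to translate the geometric content of the theorem into a concentration statement about ultra-log-concave (ULC) random variables, with the Alexandrov--Fenchel inequalities serving as the bridge between convex geometry and probability.

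First, I would use the Alexandrov--Fenchel inequalities---specifically, the sharpened form stating that $V_j(\mathsf{K})/\binom{n}{j}$ is log-concave in $j$---to conclude that the law of $Z_{\mathsf{K}}$ is ULC on $\{0, 1, \dots, n\}$. Concretely, $\Prob{Z_{\mathsf{K}} = j} = \binom{n}{j} q_j$ for a log-concave sequence $(q_j)$, where $q_j = V_j(\mathsf{K})/(\binom{n}{j} W(\mathsf{K}))$. This is the only input from convex geometry; the rest of the proof reduces to a concentration result for ULC distributions.

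Next I would invoke variance and exponential-moment bounds for ULC distributions. A ULC sequence on $\{0,\ldots,n\}$ with mean $\mu$ has variance controlled in terms of the binomial with matching mean; standard ULC arguments then give an estimate of the form $\Var Z_{\mathsf{K}} \leq c \min(\mu, n-\mu)$, which in particular yields $\Var Z_{\mathsf{K}} \leq 4n$. For the tail bound, I would establish an exponential-moment estimate of the form $\Expect \exp(\theta(Z_{\mathsf{K}} - \Expect Z_{\mathsf{K}})) \leq \exp(C n \theta^2)$ in a suitable range of $\theta$, and then apply Chernoff's inequality with $\theta$ proportional to $t/\sqrt{n}$ to obtain the sub-Gaussian tail $2\econst^{-3t^2/28}$.

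The main obstacle is obtaining sub-Gaussian concentration at \emph{every} scale $t\sqrt{n} \leq n$: log-concavity alone only yields exponential tails beyond one standard deviation, so one needs a sharper moment generating function analysis tailored to the ULC structure. A natural route is to dominate the ULC MGF by that of a Bernoulli sum (or a binomial) with matching first moment---possibly via a ``Bernoulli decomposition'' of the ULC law, or by a direct convexity argument on $\log \Expect \exp(\theta Z_{\mathsf{K}})$---and then carefully optimize the Chernoff parameter $\theta$ against $t$ to recover the explicit constant $3/28$.
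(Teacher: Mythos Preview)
Your route is genuinely different from the paper's. The paper does not argue through the ULC property at all for Theorem~\ref{thm:intvol-intro}. Instead, it uses the Steiner formula to write the Wills density $\mu_{\mathsf{K}}(\vct{x}) = W(\mathsf{K})^{-1}\econst^{-\pi\dist^2(\vct{x},\mathsf{K})}$ on $\R^n$, observes that this density is log-concave, and shows via distance-integral identities that the moments and the mgf of $Z_{\mathsf{K}}$ are expressible in terms of the (shifted) information content $H_{\mathsf{K}} = \pi\dist^2(\vct{y},\mathsf{K})$, $\vct{y}\sim\mu_{\mathsf{K}}$. The variance bound then follows from the sharp varentropy inequality for log-concave densities ($\mathrm{VarEnt}[\mu]\le n$), and the tail bound from the optimal information-content mgf estimate of Fradelizi--Madiman--Wang, after which a Chernoff argument yields the Bennett-type inequality that is then weakened to the stated sub-Gaussian form. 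Alexandrov--Fenchel is used in the paper only for the maximum-entropy theorem, not for concentration.

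Your variance step is plausible and, if carried out, would likely beat the constant $4$; indeed the paper itself notes that the ULC route gives concentration (citing Caputo--Dai Pra--Posta) and that Brascamp--Lieb yields $\Var[Z_{\mathsf{K}}]\le 2(n-\Expect Z_{\mathsf{K}})$. The concrete gap is in your tail argument: ultra-log-concavity does \emph{not} imply that the probability generating polynomial is real-rooted, so a ``Bernoulli decomposition'' of the ULC law is not available in general, and I am not aware of a standard result that the centered mgf of a ULC law on $\{0,\dots,n\}$ is dominated by that of the matching-mean binomial. Absent such a comparison, ULC methods tend to deliver Poisson-type (sub-exponential) rather than sub-Gaussian tails on the full range $t\sqrt{n}\le n$, which is exactly the obstacle you flag. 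The paper sidesteps this by working with a continuous log-concave density on $\R^n$, where the needed mgf bound is a theorem; that is the main idea your proposal is missing.
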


To prove this theorem, we first convert questions about the intrinsic volume random variable
into questions about metric geometry (Section~\ref{sec:steiner}).
We reinterpret the metric geometry formulations in terms of the information content
of a log-concave probability density.  Then we can control the variance (Section~\ref{sec:variance})
and concentration properties (Section~\ref{sec:concentration}) of the intrinsic volume random variable
using the analogous results for the information content random variable.

A general probability distribution on $\{0, 1, 2, \dots, n\}$ can have
variance higher than $n^2 / 3$.
In contrast, the intrinsic volume random variable has variance no greater than $4n$.
Moreover, the intrinsic volume random variable behaves, at worst, like a normal
random variable with mean $\Expect Z_{\mathsf{K}}$ and variance less than $5n$.
Thus, most of the mass of the intrinsic volume sequence is concentrated
on an interval of about $O( \sqrt{n} )$ indices.

Looking back to Example~\ref{ex:cube-iv}, concerning the unit-volume cube $\mathsf{Q}_n$,
we see that Theorem~\ref{thm:intvol-intro} gives a qualitatively
accurate description of the intrinsic volume sequence.  On the other
hand, the bounds for scaled cubes $s \mathsf{Q}_n$ can be quite poor;
see Section~\ref{sec:cubes}.

\subsection{Concentration of Conic Intrinsic Volumes}

Theorem~\ref{thm:intvol-intro} and its proof parallel recent developments in the theory of \emph{conic} intrinsic volumes,
which appear in the papers~\cite{ALMT14:Living-Edge,MT14:Steiner-Formulas,GNP17:Gaussian-Phase}.
Using the concentration of conic intrinsic volumes, we were able to establish
that random configurations of convex cones exhibit striking phase transitions;
these facts have applications in signal processing~\cite{McC13:Geometric-Analysis,MT14:Sharp-Recovery,ALMT14:Living-Edge,MT17:Achievable-Performance}.
We are confident that extending the ideas in the current paper
will help us discover new phase transition phenomena
in Euclidean integral geometry.

\subsection{Maximum-Entropy Convex Bodies}

The probabilistic approach to the intrinsic volume sequence
suggests other questions to investigate.
For instance, we can study the entropy of the intrinsic volume random
variable, which reflects the dispersion of the intrinsic volume sequence.

\begin{definition}[Intrinsic Entropy] \label{def:intent}
Let $\mathsf{K} \subset \R^n$ be a nonempty convex body.  The intrinsic entropy of $\mathsf{K}$
is the entropy of the intrinsic volume random variable $Z_{\mathsf{K}}$:
$$
\mathrm{IntEnt}(\mathsf{K}) := \mathrm{Ent}[Z_{\mathsf{K}}]
	= - \sum_{j=0}^n \tilde{V}_j(\mathsf{K}) \cdot \log \tilde{V}_j(\mathsf{K}).
$$
\end{definition}

We have the following extremal result.

\begin{theorem}[Cubes Have Maximum Entropy] \label{thm:maxent-intro}
Fix the ambient space $\R^n$, and let $d \in [0, n)$.
There is a scaled cube whose central intrinsic volume equals $d$:
$$
\Delta( s_{d,n} \mathsf{Q}_n ) = d
\quad\text{when}\quad
s_{d,n} = \frac{d}{n-d}.
$$
Among convex bodies with central intrinsic volume $d$,
the scaled cube $s_{d,n} \mathsf{Q}_n$ has the maximum intrinsic entropy.
Among all convex bodies, the unit-volume cube has the maximum intrinsic entropy.
In symbols,
$$
\max\{ \mathrm{IntEnt}(\mathsf{K}) : \text{$\Delta(\mathsf{K}) = d$} \}
	= \mathrm{IntEnt}(s_{d,n} \mathsf{Q}_n )
	\leq \mathrm{IntEnt}( \mathsf{Q}_n ).
$$
The maximum takes place over all nonempty convex bodies $\mathsf{K} \subset \R^n$.
\end{theorem}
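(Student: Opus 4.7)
The plan is to combine structural log-concavity properties of the intrinsic volume sequence---derived from the Alexandrov--Fenchel inequality---with an information-theoretic maximum-entropy theorem characterizing binomial distributions as extremal in the resulting class. A preliminary observation is immediate from the Scaled Cube example: $\Delta(s\mathsf{Q}_n) = ns/(1+s)$, so $s_{d,n} = d/(n-d)$ is the unique value yielding $\Delta(s_{d,n}\mathsf{Q}_n) = d$. Moreover, that same example shows that the normalized intrinsic volume law of $s\mathsf{Q}_n$ coincides with the binomial distribution $\textsc{Bin}(n, s/(1+s))$; hence $Z_{s_{d,n}\mathsf{Q}_n} \sim \textsc{Bin}(n, d/n)$. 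The bulk of the proof is therefore to show that binomials are entropy-maximizers within the family of intrinsic-volume distributions of a fixed mean.

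For this extremality, I would first express $V_j(\mathsf{K})$ as a mixed volume of $\mathsf{K}$ with the Euclidean ball $\mathsf{B}_n$ via the Steiner formula, and apply the Alexandrov--Fenchel inequality to deduce a log-concavity property for the sequence $\{V_j(\mathsf{K})\}_{j=0}^n$ that places the law $j \mapsto \tilde{V}_j(\mathsf{K})$ inside a class $\mathcal{U}$ of admissible probability laws on $\{0,1,\ldots,n\}$. The information-theoretic input is then a maximum-entropy theorem of Yu and of Johnson--Kontoyiannis--Madiman: among ultra-log-concave laws on $\{0,1,\ldots,n\}$ with fixed mean $\mu$, the entropy is maximized by $\textsc{Bin}(n, \mu/n)$. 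Applied to $\tilde{V}_\bullet(\mathsf{K})$ with $\mu = \Delta(\mathsf{K}) = d$, this yields
$$
\mathrm{IntEnt}(\mathsf{K}) \leq \mathrm{Ent}\bigl[\textsc{Bin}(n, d/n)\bigr] = \mathrm{IntEnt}(s_{d,n}\mathsf{Q}_n).
$$
The unconditional claim then reduces to maximizing the entropy of $\textsc{Bin}(n, p)$ over $p \in [0,1]$: the symmetry $H(p) = H(1-p)$ and concavity in $p$ pin the maximizer at $p = 1/2$, corresponding to $s = 1$ and hence to $\mathsf{K} = \mathsf{Q}_n$.

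The main obstacle I anticipate is pinning down the correct form of the log-concavity class $\mathcal{U}$. Raw Alexandrov--Fenchel gives log-concavity of the mixed-volume sequence $j \mapsto V(\mathsf{K},\ldots,\mathsf{K},\mathsf{B}_n,\ldots,\mathsf{B}_n)$ with $j$ copies of $\mathsf{K}$, which translates to log-concavity of the quermassintegrals rather than of the normalized intrinsic volumes $V_j/\binom{n}{j}$ directly, because a factor of $\kappa_{n-j}$ intervenes and $\kappa_j$ is itself log-concave in the wrong direction. Bridging this gap---either by refining the Alexandrov--Fenchel-derived inequality through careful tracking of the $\kappa_j$-ratios, or by adapting the information-theoretic maximum-entropy result to the precise class of sequences produced by Alexandrov--Fenchel---is where the technical content of the theorem lies. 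Once the correct ultra-log-concavity statement for the intrinsic volume sequence is in hand, the remainder of the argument is direct bookkeeping.
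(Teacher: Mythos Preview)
Your approach is essentially identical to the paper's: Alexandrov--Fenchel yields ultra-log-concavity of the intrinsic volume sequence, Yu's theorem identifies the binomial as the entropy maximizer among ULC laws with given mean and support, and the normalized intrinsic volumes of the scaled cube realize that binomial. The ``obstacle'' you flag is not actually an obstacle: the passage from log-concavity of the quermassintegrals $W_j^{(n)}$ to the ULC inequality $j\,V_j^2 \ge (j+1)\,V_{j+1}V_{j-1}$ is a classical computation due to Chevet and McMullen---the $\kappa_j$ factors are handled by the log-convexity of the Gamma function (equivalently, $\Gamma(x+\tfrac12)^2 \le \Gamma(x)\Gamma(x+1)$), so nothing goes ``in the wrong direction.''
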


The proof of Theorem~\ref{thm:maxent-intro} also depends on recent
results from information theory, as well as some deep properties
of the intrinsic volume sequence.
This analysis appears in Section~\ref{sec:maxent}.

Theorem~\ref{thm:maxent-intro} joins a long procession of results
on the extremal properties of the cube.  In particular, the cube
solves the (affine) reverse isoperimetric problem for symmetric
convex bodies~\cite{Bal91:Volume-Ratios}.
That is, every symmetric convex body $\mathsf{K} \subset \R^n$
has an affine image whose volume is one and whose surface area
is not greater than $2n$, the surface area of $\mathsf{Q}_n$.
See Section~\ref{sec:vr} for an equivalent statement.

\begin{remark}[Minimum Entropy]
The convex body consisting of a single point $\vct{x}_0 \in \R^n$
has the minimum intrinsic entropy: $\mathrm{IntEnt}(\{ \vct{x}_0 \}) = 0$.
Very large convex bodies also have negligible entropy:
$$
\lim_{s \to \infty} \mathrm{IntEnt}( s \mathsf{K} ) = 0
\quad\text{for each nonempty convex body $\mathsf{K} \subset \R^n$.}
$$
The limit is a consequence of Example~\ref{ex:large-set}.
\end{remark}

\subsection{Other Inequalities for Intrinsic Volumes}
\label{sec:ineqs}

The classic literature on convex geometry contains a number of prominent
inequalities relating the intrinsic volumes, and this topic continues to
arouse interest.  This section offers a short overview of the
main results of this type.  Our presentation is influenced
by~\cite{McM91:Inequalities-Intrinsic,PPV17:Quantitative-Reversal}.
See~\cite[Chap.~7]{Sch14:Convex-Bodies} for a comprehensive treatment.

\begin{remark}[Unrelated work]
Although the title of the paper~\cite{AS16:Whitney-Numbers} includes the phrase
``concentration of intrinsic volumes,'' the meaning is quite different.  Indeed,
the focus of that work is to study hyperplane arrangements via
the intrinsic volumes of a random sequence associated with the arrangement.
\end{remark}

\subsubsection{Ultra-Log-Concavity}

The Alexandrov--Fenchel inequality (AFI)
is a profound result on the behavior of mixed volumes; see~\cite[Sec.~7.3]{Sch14:Convex-Bodies}
or~\cite{SVH18:Mixed-Volumes}.
We can specialize the AFI from mixed volumes to the particular
case of quermassintegrals.  In this instance,
the AFI states that the quermassintegrals of a convex body
$\mathsf{K} \subset \R^n$ compose a log-concave sequence:
\begin{equation} \label{eqn:qm-lc}
W^{(n)}_j(\mathsf{K})^2 \geq W^{(n)}_{j+1}(\mathsf{K}) \cdot W^{(n)}_{j-1}(\mathsf{K})
	\quad\text{for $j = 1, 2, 3, \dots, n-1$.}
\end{equation}
As Chevet~\cite{Che76:Processus-Gaussiens} and McMullen~\cite{McM91:Inequalities-Intrinsic} independently observed,
the log-concavity~\eqref{eqn:qm-lc} of the quermassintegral sequence
implies that the intrinsic volumes form an ultra-log-concave (ULC) sequence:
\begin{equation} \label{eqn:iv-ulc}
j \cdot V_j(\mathsf{K})^2 \geq (j+1) \cdot V_{j+1}(\mathsf{K}) \cdot V_{j-1}(\mathsf{K})
	\quad\text{for $j = 1, 2, 3, \dots, n-1$.}
\end{equation}
This fact plays a key role in the proof of Theorem~\ref{thm:maxent-intro}.
For more information on log-concavity and ultra-log-concavity,
see the survey article~\cite{SW14:Log-Concavity}.

From~\eqref{eqn:iv-ulc}, Chevet and McMullen both deduce that all of the intrinsic volumes
are controlled by the first one, and they derive an estimate for the total intrinsic volume:
$$
V_j(\mathsf{K}) \leq \frac{1}{j!} V_1(\mathsf{K})^j
	\quad\text{for $j = 1, 2, 3, \dots, n$,}
	\quad\text{hence}\quad
	W(\mathsf{K}) \leq \econst^{V_1(\mathsf{K})}.
$$
This estimate implies some growth and decay properties
of the intrinsic volume sequence.  An interesting
application appears in Vitale's paper~\cite{Vit96:Wills-Functional},
which derives concentration for the supremum of a Gaussian process
from the foregoing bound on the total intrinsic volume.

It is possible to establish a concentration result for intrinsic volumes
as a direct consequence of~\eqref{eqn:iv-ulc}.  Indeed, it is intuitive that
a ULC sequence should concentrate around its centroid.  This point follows
from Caputo et al.~\cite[Sec.~3.2]{CDP09:Convex-Entropy},
which 
transcribes the usual semigroup proof of a log-Sobolev
inequality to the discrete setting.
When applied to intrinsic volumes, this method gives concentration
on the scale of the mean width $V_1(\mathsf{K})$ of the convex body $\mathsf{K}$.
This result captures a phenomenon different from
Theorem~\ref{thm:intvol-intro}, where the scale for the concentration is the dimension $n$.

\subsubsection{Isoperimetric Ratios}
\label{sec:vr}

Another classical consequence of the AFI is
a sequence of comparisons for the \emph{isoperimetric ratios}
of the volume of a convex body $\mathsf{K} \subset \R^n$,
relative to the Euclidean ball $\mathsf{B}_n$:
\begin{equation} \label{eqn:vrs}
\left( \frac{V_n(\mathsf{K})}{V_n(\mathsf{B}_n)} \right)^{1/n}
	\leq \left( \frac{V_{n-1}(\mathsf{K})}{V_{n-1}(\mathsf{B}_n)} \right)^{1/(n-1)}
	\leq \dots
	\leq \frac{V_1(\mathsf{K})}{V_1(\mathsf{B}_n)}.
\end{equation}
The first inequality is the isoperimetric inequality,
and the inequality between $V_n$ and $V_1$ is called Urysohn's inequality~\cite[Sec.~7.2]{Sch14:Convex-Bodies}.
Isoperimetric ratios play a prominent role in asymptotic convex geometry;
for example, see~\cite{Pis89:Volume-Convex,Bal97:Elementary-Introduction,AGM15:Asymptotic-Geometric}.

Some of the inequalities in~\eqref{eqn:vrs} can be inverted by applying
affine transformations.  For example, Ball's reverse isoperimetric
inequality~\cite{Bal91:Volume-Ratios} states that $\mathsf{K}$
admits an affine image $\hat{\mathsf{K}}$ for which
$$
\left( \frac{V_{n-1}(\hat{\mathsf{K}})}{V_{n-1}(\mathsf{B}_n)} \right)^{1/(n-1)}
	\leq \mathrm{const}_n \cdot \left( \frac{V_{n}(\hat{\mathsf{K}})}{V_{n}(\mathsf{B}_n)} \right)^{1/n}.
$$
The sharp value for the constant is known; equality holds when $\mathsf{K}$ is a simplex.
If we restrict our attention to symmetric convex bodies, then the cube is extremal.

The recent paper~\cite{PPV17:Quantitative-Reversal} of Paouris et al.~
contains a more complete, but less precise, set of reversals.
Suppose that $\mathsf{K}$ is a symmetric convex body.
Then there is a parameter $\beta_{\star} := \beta_{\star}(\mathsf{K})$ for which
\begin{equation} \label{eqn:paouris}
\frac{V_{1}(\mathsf{K})}{V_{1}(\mathsf{B}_n)}
	\leq \left[ 1 + \mathrm{const} \cdot \left( \beta_{\star} j \log\left(\frac{\econst}{j\beta_{\star}} \right) \right)^{1/2} \right]
	\cdot \left( \frac{V_{j}(\mathsf{K})}{V_{j}(\mathsf{B}_n)} \right)^{1/j}
\quad\text{for $j = 1, 2,3, \dots, \mathrm{const} / \beta_{\star}$.}
\end{equation}
The constants here are universal but unspecified.  This result implies
that the prefix of the sequence of isoperimetric ratios is roughly constant.
The result~\eqref{eqn:paouris} leaves open the question about the
behavior of the sequence beyond the distinguished point.

It would be interesting to reconcile the work of Paouris et al.~\cite{PPV17:Quantitative-Reversal}
with Theorem~\ref{thm:intvol-intro}.  In particular, it is unclear
whether the isoperimetric ratios remain constant,
or whether they exhibit some type of phase transition.
We believe that our techniques have implications for this question.

\section{Steiner's Formula and Distance Integrals}
\label{sec:steiner}

The first step in our program is to convert questions about the intrinsic
volume random variable into questions in metric geometry.  We can accomplish
this goal using Steiner's formula, which links the intrinsic volumes of
a convex body to its expansion properties.  We reinterpret Steiner's
formula as a distance integral, and we use this result to compute moments
of the intrinsic volume random variable.  This technique, which appears
to be novel, drives our approach.

\subsection{Steiner's Formula}

The Minkowski sum of a nonempty convex body and a Euclidean ball is
called a \emph{parallel body}.  Steiner's formula gives an explicit
expansion for the volume of the parallel body in terms of the
intrinsic volumes of the convex body.

\begin{fact}[Steiner's Formula] \label{fact:steiner}
Let $\mathsf{K} \subset \R^n$ be a nonempty convex body.  For each $\lambda \geq 0$,
$$
\mathrm{Vol}_n( \mathsf{K} + \lambda \mathsf{B}_n )
	= \sum_{j=0}^n \lambda^{n - j} \kappa_{n - j} V_j(\mathsf{K}).
$$
\end{fact}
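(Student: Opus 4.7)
The plan is to recognize the map $\mathsf{K} \mapsto \mathrm{Vol}_n(\mathsf{K} + \lambda \mathsf{B}_n)$, for each fixed $\lambda \geq 0$, as a continuous, rigid-motion-invariant valuation on convex bodies in $\R^n$. Hadwiger's basis theorem (stated earlier in the excerpt) then forces it to be a linear combination of the intrinsic volumes $V_0, V_1, \dots, V_n$, with coefficients that depend on $\lambda$; a test computation on the family of Euclidean balls will pin those coefficients down to $\lambda^{n-j}\kappa_{n-j}$.

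First I would set $F_\lambda(\mathsf{K}) \defeq \mathrm{Vol}_n(\mathsf{K} + \lambda \mathsf{B}_n)$ and verify the three hypotheses of Hadwiger's theorem. Invariance under proper rigid motions is immediate from the translation- and rotation-invariance of Lebesgue measure together with the rotational symmetry of $\mathsf{B}_n$. Continuity in the Hausdorff metric follows because Minkowski addition with a fixed compact set is continuous in that metric, and $n$-dimensional volume is continuous on convex bodies. The valuation property reduces, via inclusion-exclusion for Lebesgue measure, to two set-theoretic identities: $(\mathsf{C} \cup \mathsf{K}) + \lambda\mathsf{B}_n = (\mathsf{C} + \lambda \mathsf{B}_n) \cup (\mathsf{K} + \lambda \mathsf{B}_n)$, which holds in general, and $(\mathsf{C} \cap \mathsf{K}) + \lambda\mathsf{B}_n = (\mathsf{C} + \lambda\mathsf{B}_n) \cap (\mathsf{K} + \lambda \mathsf{B}_n)$, which requires the hypothesis that $\mathsf{C} \cup \mathsf{K}$ be convex. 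For the second, given $z$ in the right-hand side with approximants $c \in \mathsf{C}$ and $k \in \mathsf{K}$ at distance at most $\lambda$, the sets $\{t \in [0,1] : (1-t)k+tc \in \mathsf{C}\}$ and $\{t \in [0,1] : (1-t)k+tc \in \mathsf{K}\}$ are closed, cover $[0,1]$, and therefore overlap; convexity of $t \mapsto \norm{z - ((1-t)k+tc)}$ along the segment keeps the resulting point of $\mathsf{C} \cap \mathsf{K}$ within distance $\lambda$ of $z$.

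Once $F_\lambda(\mathsf{K}) = \sum_{j=0}^n c_j(\lambda)\, V_j(\mathsf{K})$ is granted by Hadwiger, I would fix the coefficients by evaluating both sides on the dilated balls $r\mathsf{B}_n$ for $r \geq 0$. The left side is $\mathrm{Vol}_n((r+\lambda)\mathsf{B}_n) = (r+\lambda)^n \kappa_n$, which expands by the binomial theorem. On the right, Example~\ref{ex:ball-iv} and the homogeneity of $V_j$ give $V_j(r\mathsf{B}_n) = r^j \binom{n}{j} \kappa_n/\kappa_{n-j}$. Matching powers of $r$ in two polynomials that agree for all $r \geq 0$ yields $c_j(\lambda) = \lambda^{n-j} \kappa_{n-j}$ for every $j$, which is precisely the claimed formula.

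The main obstacle is the valuation property, specifically the Minkowski-intersection identity under the convex-union hypothesis; every other step is routine once Hadwiger is invoked. If that identity proves awkward to present, an alternative route is a direct polytope calculation: for a polytope $\mathsf{K}$, partition the tube $\mathsf{K} + \lambda \mathsf{B}_n$ into pieces indexed by the faces of $\mathsf{K}$ (each face Minkowski-summed with a spherical sector in its normal cone), integrate face by face, and pass to the Hausdorff limit using continuity. That path is more calculation-heavy and essentially reproves what Hadwiger already gives for free, so I would only fall back on it if the valuation identity were to present unforeseen difficulty.
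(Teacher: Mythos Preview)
Your argument is correct. The paper itself, however, does not supply a proof of Steiner's formula: it states the result as a \textbf{Fact}, remarks that the proof ``is fairly easy,'' and cites the textbooks of Schneider and Gruber. The standard proofs in those references follow essentially your fallback route---decompose the parallel body of a polytope into pieces indexed by faces (each the product of a face with a spherical sector in its normal cone), sum the volumes, and extend to general convex bodies by Hausdorff continuity---or else pass through the polynomiality of mixed volumes.

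Your primary route via Hadwiger's characterization is logically valid within the paper's framework, since the $V_j$ are defined by Kubota's formula and Hadwiger's theorem is imported as a black box; your handling of the Minkowski--intersection identity under the convex-union hypothesis is also clean. That said, the approach inverts the usual order of dependence: Hadwiger's characterization is a substantially deeper theorem than Steiner's formula, and in most developments Steiner's formula is established first, on the way to Hadwiger. So while your proof succeeds, it trades an elementary direct computation for an appeal to a much stronger result; the polytope fallback you describe is closer in spirit both to what the cited references do and to the paper's assertion that the proof is easy.
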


In other words, the volume of the parallel body is a \emph{polynomial} function
of the expansion radius.  Moreover, the coefficients depend only on the intrinsic
volumes of the convex body.  The proof of Fact~\ref{fact:steiner}
is fairly easy; see~\cite{Sch14:Convex-Bodies,Gru07:Convex-Discrete}.

\begin{remark}[Steiner and Kubota]
Steiner's formula can be used to \emph{define} the intrinsic volumes.
The definition we have given in~\eqref{eqn:intvols} is usually called
\emph{Kubota's formula}; it can be derived as a consequence
of Fact~\ref{fact:steiner} and Cauchy's formula for surface area.
For example, see~\cite[Sec.~B.5]{AGM15:Asymptotic-Geometric}.
\end{remark}

\subsection{Distance Integrals}

The parallel body can also be expressed as the set of points within
a fixed distance of the convex body.  This observation motivates us
to introduce the distance to a convex set.

\begin{definition}[Distance to a Convex Body] \label{def:dist}
The distance to a nonempty convex body $\mathsf{K}$ is the function
\begin{equation*} 
\dist(\vct{x}, \mathsf{K}) := \min\big\{ \norm{ \vct{y} - \vct{x} } : {\vct{y} \in \mathsf{K}} \big\}
\quad\text{where $\vct{x} \in \R^n$.}
\end{equation*}
\end{definition}

\noindent
It is not hard to show that the distance, $\dist(\cdot, \mathsf{K})$, 
and its square, $\dist^2( \cdot, \mathsf{K} )$, are both convex functions.

Here is an alternative statement of Steiner's formula
in terms of distance integrals~\cite{Had75:Willssche}.

\begin{proposition}[Distance Integrals] \label{prop:dist-int}
Let $\mathsf{K} \subset \R^n$ be a nonempty convex body.
Let $f : \R_+ \to \R$ be an absolutely integrable function.
Provided that the integrals on the right-hand side converge,
$$
\int_{\R^n} f( \dist(\vct{x}, \mathsf{K})) \idiff{\vct{x}}
	= f(0) \cdot V_n(\mathsf{K}) 
	+ \sum_{j=0}^{n-1} \left( \omega_{n-j} \int_0^{\infty} f(r) \cdot r^{n-j-1} \idiff{r} \right)
	\cdot V_j(\mathsf{K}).
$$
This result is equivalent to Fact~\ref{fact:steiner}.
\end{proposition}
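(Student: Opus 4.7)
The plan is to reduce the identity to Steiner's formula via the distribution of the distance function. Let $\mu_{\mathsf{K}}$ denote the pushforward of Lebesgue measure on $\R^n$ under the map $\vct{x} \mapsto \dist(\vct{x}, \mathsf{K})$. Since the sublevel set $\{ \vct{x} \in \R^n : \dist(\vct{x}, \mathsf{K}) \leq r \}$ is precisely the parallel body $\mathsf{K} + r \mathsf{B}_n$, Fact~\ref{fact:steiner} identifies the cumulative distribution function of $\mu_{\mathsf{K}}$ as
\[
\mu_{\mathsf{K}}\big([0, r]\big) = \mathrm{Vol}_n(\mathsf{K} + r \mathsf{B}_n) = \sum_{j=0}^n \kappa_{n-j}\, r^{n-j} V_j(\mathsf{K}), \qquad r \geq 0.
\]
This is a polynomial in $r$ whose value at $r = 0$ is exactly $V_n(\mathsf{K})$, so $\mu_{\mathsf{K}}$ decomposes as a point mass of weight $V_n(\mathsf{K})$ at the origin together with an absolutely continuous part on $(0, \infty)$.

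Next, I would differentiate the polynomial to read off the density of the continuous part and use the relation $\omega_m = m \kappa_m$ to recognize
\[
\frac{\mathrm{d}}{\mathrm{d}r} \mathrm{Vol}_n(\mathsf{K} + r \mathsf{B}_n) = \sum_{j=0}^{n-1} \omega_{n-j}\, r^{n-j-1} V_j(\mathsf{K}).
\]
The change-of-variables (layer-cake) formula then gives
\[
\int_{\R^n} f\big(\dist(\vct{x}, \mathsf{K})\big) \idiff{\vct{x}} = \int_{[0,\infty)} f(r) \, \mathrm{d}\mu_{\mathsf{K}}(r) = f(0) \cdot V_n(\mathsf{K}) + \sum_{j=0}^{n-1} \omega_{n-j} V_j(\mathsf{K}) \int_0^\infty f(r)\, r^{n-j-1} \idiff{r},
\]
where the exchange of the finite sum with the integral is legitimate under the stated convergence hypothesis on $f$.

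For the equivalence with Fact~\ref{fact:steiner}, I would apply the identity with $f = \pInd_{[0, \lambda]}$: the left-hand side collapses to $\mathrm{Vol}_n(\mathsf{K} + \lambda \mathsf{B}_n)$, and the routine evaluation $\int_0^\lambda r^{n-j-1} \idiff{r} = \lambda^{n-j}/(n-j)$ combined with $\omega_{n-j}/(n-j) = \kappa_{n-j}$ recovers the Steiner expansion term by term. No step presents a genuine obstacle; the only point needing comment is justifying the differentiation of $r \mapsto \mathrm{Vol}_n(\mathsf{K} + r \mathsf{B}_n)$ on $(0, \infty)$ and confirming the absence of a singular continuous component of $\mu_{\mathsf{K}}$ there, both of which are automatic from the polynomial form supplied by Fact~\ref{fact:steiner}.
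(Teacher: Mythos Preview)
Your proof is correct and follows essentially the same route as the paper's: both define the pushforward of Lebesgue measure under $\dist(\cdot,\mathsf{K})$, identify its distribution function via Steiner's formula, differentiate to obtain the density on $(0,\infty)$, and then apply change of variables. You additionally spell out the reverse implication (recovering Steiner's formula from the identity by taking $f = \pInd_{[0,\lambda]}$), which the paper asserts but does not write out.
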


\begin{proof} 
For $r > 0$,
Steiner's formula gives an expression for the volume of
the locus of points within distance $r$ of the convex body:
$$
\mathrm{Vol}_n \{ \vct{x} \in \R^n : \dist( \vct{x}, \mathsf{K} ) \leq r \}
	= \sum_{j=0}^n r^{n - j} \kappa_{n - j} V_j(\mathsf{K}).
$$
The rate of change in this volume satisfies
\begin{equation} \label{eqn:ddr-vol}
\frac{\diff{}}{\diff{r}} \mathrm{Vol}_n \{ \vct{x} \in \R^n : \dist( \vct{x}, \mathsf{K} ) \leq r \}
	= \sum_{j=0}^{n-1} r^{n-j-1} \omega_{n-j} V_j(\mathsf{K}).
\end{equation}
We have used the relation~\eqref{eqn:ball-vol} that $\omega_{n-j} = (n-j) \kappa_{n-j}$.

Let $\mu_{\sharp}$ be the push-forward
of the Lebesgue measure on $\R^n$ to $\R_+$ by the function $\dist(\cdot; \mathsf{K})$.
That is,
$$
\mu_{\sharp}(\mathsf{A}) := \mathrm{Vol}_n \{ \vct{x} \in \R^n : \dist(\vct{x}; \mathsf{K}) \in \mathsf{A} \}
\quad\text{for each Borel set $\mathsf{A} \subset \R_+$.}
$$
This measure clearly satisfies $\mu_{\sharp}( \{ 0 \} ) = V_n(\mathsf{K})$.
Beyond that, when $0 < a < b$,
$$
\begin{aligned}
\mu_{\sharp}( (a, b] ) &= \mathrm{Vol}_n \{ \vct{x} \in \R^n : a < \dist(\vct{x}; \mathsf{K}) \leq b \}  \\
	&= \mathrm{Vol}_n \{ \vct{x} \in \R^n : \dist(\vct{x}; \mathsf{K}) \leq b \}
		- \mathrm{Vol}_n\{ \vct{x} \in \R^n : \dist(\vct{x}; \mathsf{K}) \leq a \} \\
	&= \int_a^b \frac{\diff{}}{\diff{r}} \mathrm{Vol}_n \{ \vct{x} \in \R^n : \dist(\vct{x}; \mathsf{K}) \leq r \} \idiff{r}.
\end{aligned}
$$
Therefore, by definition of the push-forward,
$$
\begin{aligned}
\int_{\R^n} f(\dist(\vct{x}; \mathsf{K})) \idiff{\vct{x}}
	&= \int_{\R_+} f(r) \idiff{\mu_{\sharp}(r)} \\
	&= f(0) \cdot V_n(\mathsf{K}) + \int_0^\infty f(r) \cdot \frac{\diff{}}{\diff{r}} \mathrm{Vol}_n \{ \vct{x} \in \R^n : \dist(\vct{x}; \mathsf{K}) \leq r \} \idiff{r}.
\end{aligned}
$$
Introduce~\eqref{eqn:ddr-vol} into the last display to arrive at the result.
\end{proof}

\subsection{Moments of the Intrinsic Volume Sequence}

We can compute moments (i.e., linear functionals) of the sequence
of intrinsic volumes by varying the function $f$ in Proposition~\ref{prop:dist-int}.
To that end, it is helpful to make another change of variables.

\begin{corollary}[Distance Integrals II] \label{cor:dist-int}
Let $\mathsf{K} \subset \R^n$ be a nonempty convex body.
Let $g : \R_+ \to \R$ be an absolutely integrable function.
Provided the integrals on the right-hand side converge,
\begin{multline*}
\int_{\R^n} g( \pi \dist^2(\vct{x}, \mathsf{K})) \cdot \econst^{-\pi \dist^2(\vct{x}, \mathsf{K})} \idiff{\vct{x}} \\
	= g(0) \cdot V_n(\mathsf{K})
	+ \sum_{j=0}^{n-1} \left(\frac{1}{\Gamma((n-j)/2)} \int_0^{\infty} g(r) \cdot r^{-1 + (n-j)/2} \econst^{-r} \idiff{r}
	\right) \cdot V_j(\mathsf{K}).
\end{multline*}
\end{corollary}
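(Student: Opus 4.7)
The plan is to derive Corollary~\ref{cor:dist-int} directly from Proposition~\ref{prop:dist-int} by specializing the test function and performing a single change of variables. Concretely, I would apply Proposition~\ref{prop:dist-int} with
$$
f(r) \defeq g(\pi r^2) \cdot \econst^{-\pi r^2}, \qquad r \geq 0.
$$
Since $\dist^2(\cdot,\mathsf{K})$ is a nonnegative function of $\dist(\cdot,\mathsf{K})$, the left-hand side of the corollary is exactly $\int_{\R^n} f(\dist(\vct{x},\mathsf{K})) \idiff{\vct{x}}$, and $f(0) = g(0)$, so the $V_n(\mathsf{K})$ term already matches.

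Next I would substitute $u = \pi r^2$ (so that $r^2 = u/\pi$ and $r\idiff{r} = \idiff{u}/(2\pi)$) in the radial integral appearing in Proposition~\ref{prop:dist-int}:
$$
\int_0^\infty f(r)\, r^{n-j-1}\idiff{r}
= \int_0^\infty g(\pi r^2) \econst^{-\pi r^2}\, r^{n-j-2}\cdot r\idiff{r}
= \frac{1}{2\pi^{(n-j)/2}} \int_0^\infty g(u)\, u^{(n-j)/2-1} \econst^{-u}\idiff{u}.
$$
Multiplying through by the factor $\omega_{n-j} = 2\pi^{(n-j)/2}/\Gamma((n-j)/2)$ from~\eqref{eqn:ball-vol} cancels the power of $\pi$ and the factor of $2$, leaving exactly the coefficient $\Gamma((n-j)/2)^{-1}\int_0^\infty g(r)\, r^{-1+(n-j)/2}\econst^{-r}\idiff{r}$ that appears in the statement.

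There is no genuine obstacle here: the content is entirely Proposition~\ref{prop:dist-int}, and the only care needed is to verify that the convergence hypotheses transfer correctly. The assumption in the corollary that the resulting gamma-type integrals $\int_0^\infty g(r)\, r^{-1+(n-j)/2}\econst^{-r}\idiff{r}$ converge is equivalent, via the substitution, to convergence of $\int_0^\infty f(r)\, r^{n-j-1}\idiff{r}$ for each $j$, which is exactly what Proposition~\ref{prop:dist-int} requires; and absolute integrability of $g$ against the exponential weight $\econst^{-r}$ on $\R_+$ yields absolute integrability of $f$ on $\R_+$ through the same substitution. Thus the only step worth writing out is the change of variables, and the rest is bookkeeping.
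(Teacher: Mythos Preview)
Your proposal is correct and matches the paper's approach exactly: the paper's proof is the single line ``Set $f(r) = g(\pi r^2)\cdot\econst^{-\pi r^2}$ in Proposition~\ref{prop:dist-int} and invoke~\eqref{eqn:ball-vol},'' and you have simply written out the substitution and the cancellation with $\omega_{n-j}$ in detail.
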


\begin{proof}
Set $f(r) = g( \pi r^2 ) \cdot \econst^{-\pi r^2}$ in Proposition~\ref{prop:dist-int}
and invoke~\eqref{eqn:ball-vol}.
\end{proof}

We are now prepared to compute some specific moments of the intrinsic volume sequence
by making special choices of $g$ in Corollary~\ref{cor:dist-int}.

\begin{example}[Total Intrinsic Volume] \label{ex:wills}
Consider the case where $g(r) = 1$. We obtain the
appealing formula
$$
\int_{\R^n} \econst^{-\pi \dist^2(\vct{x}, \mathsf{K})} \idiff{\vct{x}}
	= \sum_{j=0}^n V_j(\mathsf{K})
	= W(\mathsf{K}).
$$
The total intrinsic volume $W(\mathsf{K})$ was defined in~\eqref{eqn:wills}.
This identity appears in~\cite{Had75:Willssche,McM75:Nonlinear-Angle-Sum}. 
\end{example}

\begin{example}[Central Intrinsic Volume] \label{ex:centroid}
The choice $g(r) = 2r / W(\mathsf{K})$ yields
$$
\frac{1}{W(\mathsf{K})} \int_{\R^n} 2\pi \dist^2(\vct{x}, \mathsf{K}) \cdot \econst^{-\pi \dist^2(\vct{x}, \mathsf{K})} \idiff{\vct{x}}
	= \frac{1}{W(\mathsf{K})} \sum_{j=0}^n (n - j) \cdot V_j(\mathsf{K})
	= n - \Expect Z_{\mathsf{K}}.
$$
We have recognized the total intrinsic volume~\eqref{eqn:wills} and
the central intrinsic volume~\eqref{eqn:centroid}.
\end{example}

\begin{example}[Generating Functions] \label{ex:gf}
We can also develop an expression for the generating function
of the intrinsic volume sequence by selecting $g(r) = \econst^{(1-\lambda^2) r}$.  Thus,
\begin{equation} \label{eqn:gf}
\int_{\R^n} \econst^{-\lambda^2 \pi \dist^2(\vct{x}, \mathsf{K})} \idiff{\vct{x}}
	= \lambda^{-n} \sum_{j=0}^n \lambda^j V_j(\mathsf{K}).
\end{equation}
This expression is valid for all $\lambda > 0$.  See~\cite{Had75:Willssche}
or~\cite[Lem.~14.2.1]{SW08:Stochastic-Integral}.

We can reframe the relation~\eqref{eqn:gf} in terms of the moment generating function
of the intrinsic volume random variable $Z_{\mathsf{K}}$.  To do so, we make the change of variables
$\lambda = \econst^{\theta}$ and divide by the total intrinsic volume $W(\mathsf{K})$:
\begin{equation} \label{eqn:mgf}
\Expect \econst^{\theta (Z_{\mathsf{K}} - n)}
	= \frac{1}{W(\mathsf{K})} \int_{\R^n} \econst^{- \econst^{2\theta} \pi \dist^2(\vct{x}, \mathsf{K})} \idiff{\vct{x}}.
\end{equation}
This expression remains valid for all $\theta \in \R$.
\end{example}

\begin{remark}[Other Moments]
In fact, we can compute \emph{any} moment of the intrinsic volume sequence
by selecting an appropriate function $f$ in Proposition~\ref{prop:dist-int}.
Corollary~\ref{cor:dist-int} is designed to produce gamma integrals.
Beta integrals also arise naturally and lead to other striking relations.
For instance,
$$
\int_{\R^n} \frac{\diff{\vct{x}}}{(1 + \lambda \dist(\vct{x}, \mathsf{K}))^{n+1}} 
	= \kappa_n \lambda^{-n} \sum_{j=0}^n \lambda^{j} \frac{V_j(\mathsf{K})}{V_j(\mathsf{B}_n)}
	\quad\text{for $\lambda > 0$.}
$$
The intrinsic volumes of the Euclidean ball are computed in Example~\ref{ex:ball-iv}.
Isoperimetric ratios appear naturally in convex geometry (see Section~\ref{sec:vr}), so this type of result may have
independent interest.
\end{remark}

\section{Variance of the Intrinsic Volume Random Variable}
\label{sec:variance}

Let us embark on our study 
of the intrinsic volume random variable.
The main result of this section states that the variance of the intrinsic volume random variable
is significantly smaller than its range.  This is a more precise version of
the variance bound in Theorem~\ref{thm:intvol-intro}.

\begin{theorem}[Variance of the Intrinsic Volume Random Variable] \label{thm:intvol-var}
Let $\mathsf{K} \subset \R^n$ be a nonempty convex body with intrinsic volume
random variable $Z_{\mathsf{K}}$.  We have the inequalities
$$
\Var[ Z_{\mathsf{K}} ] 
\leq 2 (n + \Expect Z_{\mathsf{K}}) \leq 4n.
$$
\end{theorem}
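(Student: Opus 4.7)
The plan is to convert the variance of $Z_{\mathsf{K}}$ into the variance of an information-content random variable attached to an auxiliary log-concave density, and then to invoke the Bobkov--Madiman variance bound for log-concave densities.

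First I would use the moment generating function identity from Example~\ref{ex:gf}. Setting $D(\vct{x}) \defeq \pi \dist^2(\vct{x}, \mathsf{K})$ and $W = W(\mathsf{K})$, the relation~\eqref{eqn:mgf} reads
$$
\tilde{M}(\theta) \defeq \Expect \econst^{\theta(Z_{\mathsf{K}} - n)}
= \frac{1}{W}\int_{\R^n} \econst^{-\econst^{2\theta} D(\vct{x})}\idiff{\vct{x}}.
$$
By Example~\ref{ex:wills}, the normalized function $f(\vct{x}) \defeq \econst^{-D(\vct{x})}/W$ is a bona fide probability density on $\R^n$. Let $X$ be distributed according to $f$ and write $\Expect$ for expectation under $X$. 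Differentiating $\tilde M$ twice at $\theta = 0$ via the chain rule (using $u(\theta) = \econst^{2\theta}$, so $u'(0) = 2$ and $u''(0) = 4$) and matching with $\tilde{M}'(0)=\Expect[Z_{\mathsf{K}}-n]$ and $\tilde{M}''(0) = \Expect[(Z_{\mathsf{K}}-n)^2]$, I would derive the two clean identities
$$
\Expect Z_{\mathsf{K}} = n - 2\Expect[D(X)],
\qquad
\Var[Z_{\mathsf{K}}] = 4\Var[D(X)] - 4\Expect[D(X)].
$$
The first is just Example~\ref{ex:centroid}; the second is the new input. Combining them eliminates $\Expect[D(X)]$ and yields the pivotal identity
$$
\Var[Z_{\mathsf{K}}] = 4\Var[D(X)] - 2\bigl(n - \Expect Z_{\mathsf{K}}\bigr).
$$
Rearranging, the target inequality $\Var[Z_{\mathsf{K}}] \leq 2(n + \Expect Z_{\mathsf{K}})$ is \emph{equivalent} to the single estimate $\Var[D(X)] \leq n$.

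At this point the problem is no longer about intrinsic volumes but about a single log-concave density: because $\dist^2(\cdot,\mathsf{K})$ is convex on $\R^n$, the density $f = \econst^{-D}/W$ is log-concave, and $-\log f(X) = D(X) + \log W$, so $\Var[D(X)]$ is literally the variance of the information content of $X$. The Bobkov--Madiman variance bound for log-concave distributions, which states that for \emph{any} log-concave density $f$ on $\R^n$ the information content satisfies $\Var[-\log f(X)] \leq n$, therefore delivers $\Var[D(X)] \leq n$. The final bound $\leq 4n$ follows since $\Expect Z_{\mathsf{K}} \leq n$ (the random variable is supported in $\{0,1,\dots,n\}$).

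The main obstacle is really just the last step: one needs the Bobkov--Madiman log-concave variance bound (or an equivalent formulation, e.g., a Brascamp--Lieb--type inequality applied to the possibly non-smooth convex potential $D$). The rest is essentially bookkeeping: it is the algebraic miracle that the two differentiations of $\tilde{M}$ produce $\Var[D(X)]$ together with a correction proportional to $\Expect[D(X)]$ that precisely cancels against the shift coming from $\Expect Z_{\mathsf{K}} - n$. One minor technical point is to justify differentiation under the integral in $\tilde M$, but this is routine because $D$ is continuous, $\econst^{-uD}$ decays rapidly for any $u > 0$, and the integrand depends smoothly on $\theta$ in a neighborhood of $0$.
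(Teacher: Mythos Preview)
Your proof is correct and follows essentially the same route as the paper: you introduce the same log-concave density $f = \econst^{-\pi\dist^2(\cdot,\mathsf{K})}/W(\mathsf{K})$, derive the identity $\Var[Z_{\mathsf{K}}] = 4\Var[D(X)] - 4\Expect[D(X)]$ (the paper obtains this via Corollary~\ref{cor:dist-int} with $g(r)=4r^2/W$ rather than by differentiating the mgf, but the computation is the same), and then close with the sharp varentropy bound $\Var[-\log f(X)]\leq n$ for log-concave densities. One minor point of attribution: the bound with constant exactly $1$ is due to Nguyen and Wang (Fact~\ref{fact:varent}); Bobkov--Madiman proved it with a larger universal constant.
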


The proof of Theorem~\ref{thm:intvol-var} occupies the rest of this section.
We make a connection between the distance integrals
from Section~\ref{sec:steiner} and the information content of a log-concave
probability measure.  By using recent results on the variance of information,
we can develop bounds for the distance integrals.  These results, in turn,
yield bounds on the variance of the intrinsic volume random variable.
A closely related argument, appearing in Section~\ref{sec:concentration},
produces exponential concentration.

\begin{remark}[An Alternative Argument]
Theorem~\ref{thm:intvol-var} can be sharpened using
variance inequalities for log-concave densities.  Indeed,
it holds that
$$
\Var[ Z_{\mathsf{K}} ] \leq 2( n - \Expect Z_{\mathsf{K}} ).
$$
To prove this claim, we apply the Brascamp--Lieb inequality~\cite[Thm.~4.1]{BL76:Extensions-Brunn-Minkowski}
to a perturbation of the log-concave density~\eqref{eqn:muK} described below.
It is not clear whether similar ideas lead to normal
concentration (because the density is not \emph{strongly} log-concave), 
so we have chosen to omit this development.
\end{remark}

\subsection{The Varentropy of a Log-Concave Distribution}

First, we outline some facts from information theory
about the information content in a log-concave random variable.
Let $\mu : \R^n \to \R_+$ be a log-concave probability density;
that is, a probability density that satisfies the inequalities
$$
\mu( \tau \vct{x} + (1- \tau) \vct{y}) \geq \mu(\vct{x})^{\tau} \mu(\vct{y})^{1-\tau}
\quad\text{for $\vct{x}, \vct{y} \in \R^n$ and $\tau \in [0,1]$.}
$$
We define the \emph{information content} $I_{\mu}$ of a random
point drawn from the density $\mu$ to be the random variable
\begin{equation} \label{eqn:info-cont}
I_{\mu} := - \log \mu(\vct{y})
\quad\text{where}\quad
\vct{y} \sim \mu.
\end{equation}
The symbol $\sim$ means ``has the distribution.''  The terminology
is motivated by the operational interpretation of the information content of
a \emph{discrete} random variable as the number of bits required to represent a
random realization using a code with minimal average length~\cite{BM11:Concentration-Information}.

The expected information content $\Expect I_{\mu}$ is usually known
as the \emph{entropy} of the distribution $\mu$.  The \emph{varentropy}
of the distribution is the variance of information content:
\begin{equation} \label{eqn:varent}
\mathrm{VarEnt}[\mu] := \Var[ I_{\mu} ]
	= \Expect {} ( I_{\mu} - \Expect I_{\mu} )^2.
\end{equation}
Here and elsewhere, nonlinear functions bind before the expectation.

Bobkov \& Madiman~\cite{BM11:Concentration-Information} showed that
the varentropy of a log-concave distribution on $\R^n$ is not greater than
a constant multiple of $n$.  Other researchers quickly
determined the optimal constant.  The following result was
obtained independently by Nguyen~\cite{Ngu13:Inegalites-Fonctionelles}
and by Wang~\cite{Wan14:Heat-Capacity} in their doctoral dissertations.  

\begin{fact}[Varentropy of a Log-Concave Distribution] \label{fact:varent}
Let $\mu : \R^n \to \R_+$ be a log-concave probability density.  Then
$$
\mathrm{VarEnt}[\mu] \leq n.
$$
\end{fact}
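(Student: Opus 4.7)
My plan is to analyze the information content $I_\mu = -\log\mu(\vct{y})$ by passing through the one-dimensional volume profile of the sublevel sets of $V := -\log\mu$. Write $\mu = \econst^{-V}$, so that $V\colon \R^n\to\R$ is convex, and set $g(s) := \mathrm{Vol}_n\{V\le s\}$. The sublevel sets $\{V\le s\}$ are convex and nested in $s$, so the Brunn--Minkowski inequality forces $g^{1/n}$ to be concave on its effective domain. A coarea/pushforward argument essentially identical to the one used in the proof of Proposition~\ref{prop:dist-int} then shows that $I_\mu = V(\vct{y})$ has density $p(s) = \econst^{-s}\,g'(s)$ on $[\inf V,\infty)$.

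Next I would reduce the varentropy bound to a one-dimensional variance inequality. Integrating by parts in $\int \econst^{-s}\,g'(s)\idiff{s} = 1$ shows that $\diff{\nu}(s) := \econst^{-s}\,g(s)\idiff{s}$ is itself a probability measure on the real line. Two further integrations by parts express $\Expect I_\mu$ and $\Expect I_\mu^2$ as moments of $s$ under $\nu$, yielding the clean identity
\begin{equation*}
\mathrm{VarEnt}[\mu] \;=\; \Var_{\nu}[s] - 1.
\end{equation*}
Setting $h := g^{1/n}$, the bound $\mathrm{VarEnt}[\mu]\le n$ is thus equivalent to showing $\Var_\nu[s]\le n+1$ for every probability density on $\R$ of the form $\econst^{-s}\,h(s)^n$ with $h$ concave and non-negative.

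The final step is this one-dimensional extremal problem, which I expect to be the main obstacle. The conjectural extremizer is $h$ affine, $h(s)=(s-m)_+$, for which $\nu$ is a shift of Gamma$(n+1,1)$ with variance exactly $n+1$; this matches the bound and corresponds to cone-like log-concave densities such as $\mu(\vct{x})\propto\econst^{-\norm{\vct{x}}}$ (a direct check recovers $\mathrm{VarEnt}[\mu]=n$ here). Showing rigorously that no other concave $h$ does better is delicate because the Brascamp--Lieb variance inequality applied directly to $\nu$ is lossy: the log-concavity of $\econst^{-s}h^n$ comes from the interaction of the linear tilt with a possibly weakly log-concave $h^n$, not from strong convexity of the potential. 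I would therefore attack it by a rearrangement/tilting argument --- replace $h$ by the affine function $\ell$ tangent to $h$ at the mode of $\nu$ (so $\ell\ge h$ globally by concavity) and analyse how $\Var_\nu[s]$ evolves along the one-parameter concave interpolation $h_t := (1-t)h + t\ell$, after renormalization --- and verify via a first-variation computation that stationarity forces $h$ to be affine, so that any non-affine concave $h$ yields a strictly smaller variance.
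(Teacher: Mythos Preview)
The paper does not prove this statement. It is recorded as a \emph{Fact} imported from the literature (Nguyen's thesis, Wang's thesis; see also Bobkov--Madiman and Fradelizi--Madiman--Wang), so there is no ``paper's own proof'' to compare against. Your proposal is therefore an attempt to supply an argument the paper deliberately outsources.

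Your reduction is sound and in fact tracks the standard route. Writing $\mu=\econst^{-V}$, pushing forward by $V$, and invoking Brunn--Minkowski to get $g^{1/n}$ concave is exactly how Nguyen (and, in a related form, Fradelizi--Madiman--Wang) collapse the problem to one dimension. Your integration-by-parts identity $\mathrm{VarEnt}[\mu]=\Var_\nu[s]-1$ is correct (I checked it), and the identification of the Gamma$(n+1,1)$ extremizer is the right target.

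The gap is in the last paragraph. Your proposed endgame---interpolate $h_t=(1-t)h+t\ell$ toward the tangent line at the mode, renormalize, compute the first variation, and conclude ``stationarity forces $h$ affine, so any non-affine $h$ gives strictly smaller variance''---does not close. First, a first-variation computation only locates critical points; it does not by itself show that affine $h$ is the global maximizer, and variance is not a concave functional of the density, so you cannot simply upgrade a local statement. Second, the renormalization step is doing real work: since $\ell\ge h$, the unnormalized weight $\econst^{-s}h_t(s)^n$ increases pointwise in $t$, but after dividing by its total mass the effect on $\Var_\nu[s]$ is not monotone in any obvious way, and you have not indicated how to control it. The published proofs avoid this by a different mechanism---either a localization/``needle'' argument that reduces outright to affine $h$, or a direct analytic bound on $\Var_\nu[s]$ using the concavity of $h$ together with repeated integration by parts---rather than a variational deformation. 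If you want to salvage your approach, you would need either a genuine monotonicity statement for $t\mapsto\Var_{\nu_t}[s]$ (which seems hard) or a compactness-plus-uniqueness-of-critical-point argument in a suitable topology on concave profiles.
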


\noindent
See Fradelizi et al.~\cite{FMW16:Optimal-Concentration} for more background
and a discussion of this result.

For future reference, note that the varentropy and related quantities
exhibit a simple scale invariance.  Consider the shifted information content
$$
I_{c \mu} := - \log( c \mu( \vct{y} ) )
\quad\text{where $c > 0$ and $\vct{y} \sim \mu$.}
$$
It follows from the definition that
\begin{equation} \label{eqn:info-center}
I_{c \mu} - \Expect I_{c \mu} = I_{\mu} - \Expect I_{\mu}
\quad\text{for each $c > 0$.}
\end{equation}
In particular, $\Var[ I_{c\mu} ] = \Var[ I_{\mu} ]$.

\subsection{A Log-Concave Density}

Next, we observe that the central intrinsic volume is related to the
information content of a log-concave density.  For a nonempty convex
body $\mathsf{K} \subset \R^n$, define
\begin{equation} \label{eqn:muK}
\mu_{\mathsf{K}}(\vct{x}) := \frac{1}{W(\mathsf{K})} \econst^{- \pi \dist^2(\vct{x}, \mathsf{K})}
\quad\text{for $\vct{x} \in \R^n$.}
\end{equation}
The density $\mu_{\mathsf{K}}$ is log-concave because the squared distance
to a convex body is a convex function. 
The calculation in Example~\ref{ex:wills} ensures that $\mu_{\mathsf{K}}$ is a probability density.

Introduce the (shifted) information content random variable associated with
$\mathsf{K}$:
\begin{equation} \label{eqn:info-cont-K}
H_{\mathsf{K}} := - \log( W(\mathsf{K}) \cdot \mu_{\mathsf{K}}(\vct{y}) )
	= \pi \dist^2(\vct{y}, \mathsf{K})
	\quad\text{where}\quad
	\vct{y} \sim \mu_{\mathsf{K}}.
\end{equation}
Up to the presence of the factor $W(\mathsf{K})$,
the random variable $H_{\mathsf{K}}$ is the information content
of a random draw from the distribution $\mu_{\mathsf{K}}$.
In view of~\eqref{eqn:varent} and~\eqref{eqn:info-center},
\begin{equation} \label{eqn:var-equiv}
\Var[ H_{\mathsf{K}} ] = \Var[ I_{\mu_{\mathsf{K}}} ] = \mathrm{VarEnt}[ \mu_{\mathsf{K}} ].
\end{equation}
More generally, \emph{all} central moments and cumulants of $H_{\mathsf{K}}$
coincide with the corresponding central moments and cumulants
of $I_{\mu_{\mathsf{K}}}$:
\begin{equation} \label{eqn:central-moments}
\Expect  f( H_{\mathsf{K}} - \Expect H_{\mathsf{K}} )
	= \Expect f ( I_{\mu_{\mathsf{K}}} - \Expect I_{\mu_{\mathsf{K}}} ).
\end{equation}
This expression is valid for any function $f : \R \to \R$ such that the
expectations exist.

\subsection{Information Content and Intrinsic Volumes}

We are now prepared to connect the moments of the intrinsic volume
random variable $Z_{\mathsf{K}}$ with the moments of the information
content random variable $H_{\mathsf{K}}$.  These representations
allow us to transfer results about information content into
data about the intrinsic volumes.

Using the notation from the last section, Example~\ref{ex:centroid} gives a relation
between the expectations:
\begin{equation} \label{eqn:ZK-HK}
\Expect Z_{\mathsf{K}} = n - 2 \Expect H_{\mathsf{K}}.
\end{equation}
The next result provides a similar relationship between the variances.

\begin{proposition}[Variance of the Intrinsic Volume Random Variable] \label{prop:var}
Let $\mathsf{K} \subset \R^n$ be a nonempty convex body
with intrinsic volume random variable $Z_{\mathsf{K}}$
and information content random variable $H_{\mathsf{K}}$.
We have the variance identity
$$
\Var[ Z_{\mathsf{K}} ] = 4 \, ( \Var[ H_{\mathsf{K}} ] - \Expect H_{\mathsf{K}} ).
$$
\end{proposition}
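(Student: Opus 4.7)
The plan is to obtain the stated identity as a direct corollary of the moment-computing machinery developed in Section~\ref{sec:steiner}. Concretely, I will compute the second moment $\Expect H_{\mathsf{K}}^2$ by specializing the distance-integral formula, and then compare with the first-moment identity~\eqref{eqn:ZK-HK} to extract $\Var[Z_{\mathsf{K}}]$.

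First, I would apply Corollary~\ref{cor:dist-int} with the test function $g(r) = r^2$. Since $g(0) = 0$, the $V_n$ contribution drops out, and the gamma integral evaluates via the identity $\Gamma(2 + (n-j)/2)/\Gamma((n-j)/2) = (n-j)(n-j+2)/4$. Because $H_{\mathsf{K}} = \pi \dist^2(\vct{y}, \mathsf{K})$ with $\vct{y} \sim \mu_{\mathsf{K}}$, this yields
$$
\Expect H_{\mathsf{K}}^2 = \frac{1}{W(\mathsf{K})} \int_{\R^n} \pi^2 \dist^4(\vct{x}, \mathsf{K}) \, \econst^{-\pi \dist^2(\vct{x}, \mathsf{K})} \idiff{\vct{x}} = \frac{1}{4} \sum_{j=0}^{n} (n-j)(n-j+2) \, \tilde{V}_j(\mathsf{K}),
$$
where we may include the $j=n$ term since its coefficient vanishes. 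Recognizing the right-hand sum as $\tfrac{1}{4}\Expect[(n - Z_{\mathsf{K}})(n - Z_{\mathsf{K}} + 2)]$ and expanding gives
$$
4 \Expect H_{\mathsf{K}}^2 = \Expect[(n - Z_{\mathsf{K}})^2] + 2 \Expect[n - Z_{\mathsf{K}}].
$$

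Next, I would use the first-moment identity~\eqref{eqn:ZK-HK}, namely $n - \Expect Z_{\mathsf{K}} = 2 \Expect H_{\mathsf{K}}$, which implies both $\Expect[n - Z_{\mathsf{K}}] = 2 \Expect H_{\mathsf{K}}$ and $(\Expect[n - Z_{\mathsf{K}}])^2 = 4 (\Expect H_{\mathsf{K}})^2$. Subtracting the latter from the displayed equality and using the translation-invariance of variance, $\Var[Z_{\mathsf{K}}] = \Var[n - Z_{\mathsf{K}}] = \Expect[(n - Z_{\mathsf{K}})^2] - (\Expect[n - Z_{\mathsf{K}}])^2$, I get
$$
\Var[Z_{\mathsf{K}}] = 4\Expect H_{\mathsf{K}}^2 - 4\Expect H_{\mathsf{K}} - 4(\Expect H_{\mathsf{K}})^2 = 4 \bigl( \Var[H_{\mathsf{K}}] - \Expect H_{\mathsf{K}} \bigr),
$$
which is the claim. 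There is no real obstacle here --- the whole argument reduces to verifying that the Gamma-integral evaluates correctly and keeping track of the cross-term $2 \Expect[n - Z_{\mathsf{K}}]$ that explains why a $-\Expect H_{\mathsf{K}}$ appears alongside $\Var[H_{\mathsf{K}}]$. An equivalent route, which I would note parenthetically, is to differentiate the moment generating function~\eqref{eqn:mgf} twice at $\theta = 0$; the chain-rule factor from $\econst^{2\theta}$ produces the same correction term.
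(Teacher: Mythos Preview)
Your proposal is correct and follows essentially the same approach as the paper: apply Corollary~\ref{cor:dist-int} with $g(r)=r^2$ (the paper uses the trivially rescaled $g(r)=4r^2/W(\mathsf{K})$), evaluate the resulting gamma integral to obtain $4\Expect H_{\mathsf{K}}^2 = \Expect[(n-Z_{\mathsf{K}})^2] + 2\Expect[n-Z_{\mathsf{K}}]$, and then combine with~\eqref{eqn:ZK-HK} to isolate $\Var[Z_{\mathsf{K}}]$. The parenthetical remark about differentiating~\eqref{eqn:mgf} twice is a valid alternative derivation but is not the route the paper takes.
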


\begin{proof}
Apply Corollary~\ref{cor:dist-int} with the function $g(r) = 4r^2 / W(\mathsf{K})$ to obtain
\begin{equation}
\begin{aligned}
4 \Expect H_{\mathsf{K}}^2 &= \frac{1}{W(\mathsf{K})} \int_{\R^n} 4\pi^2 \dist^4(\vct{x}, \mathsf{K})
	\cdot \econst^{-\pi\dist^2(\vct{x}, \mathsf{K})} \idiff{\vct{x}} \\
	&= \frac{1}{W(\mathsf{K})} \sum_{j=0}^{n-1} (n-j) ((n-j) + 2) \cdot V_j(\mathsf{K}) \\
	&= \Expect (n - Z_{\mathsf{K}})^2 + 2 \Expect[ n - Z_{\mathsf{K}} ] \\
	&= \Var[ n - Z_{\mathsf{K}} ] + ( \Expect [n - Z_{\mathsf{K}}] )^2 + 2\Expect[n - Z_{\mathsf{K}}] \\
	&= \Var[ Z_{\mathsf{K}} ] + 4 (\Expect H_{\mathsf{K}})^2 + 4 \Expect H_{\mathsf{K}}.
\end{aligned}
\end{equation}
We have used the definition~\eqref{eqn:ZK} of the intrinsic volume random variable
to express the sum as an expectation.  In the last step, we used the relation~\eqref{eqn:ZK-HK}
twice to pass to the random variable $H_{\mathsf{K}}$.  Finally, rearrange
the display to complete the proof.
\end{proof}

\subsection{Proof of Theorem~\ref{thm:intvol-var}}

We may now establish the main result of this section.
Proposition~\ref{prop:var} yields
$$
\Var[ Z_{\mathsf{K}} ]
	= 4 \, ( \Var[ H_{\mathsf{K}} ] - \Expect H_{\mathsf{K}} )
	= 4 \, \mathrm{VarEnt}[ \mu_{\mathsf{K}} ] - 2(n - \Expect Z_{\mathsf{K}}) 
	\leq 2 n + 2 \Expect Z_{\mathsf{K}} \leq 4n.
$$
We have invoked~\eqref{eqn:var-equiv} to replace the variance of $H_{\mathsf{K}}$
with the varentropy and~\eqref{eqn:ZK-HK} to replace 
$\Expect H_{\mathsf{K}}$ by the central intrinsic volume $\Expect Z_{\mathsf{K}}$.
The inequality is a consequence of Fact~\ref{fact:varent},
which controls the varentropy of the log-concave density $\mu_{\mathsf{K}}$.
We obtain the final bound by noting that $\Expect Z_{\mathsf{K}} \leq n$.

Here is an alternative approach to the final bound that highlights the
role of the varentropy:
$$
\Var[Z_{\mathsf{K}}] \leq 4 \Var[ H_{\mathsf{K}} ]
	= 4 \, \mathrm{VarEnt}[\mu_{\mathsf{K}}] \leq 4n.
$$
The first inequality follows from Proposition~\ref{prop:var},
and the second inequality is Fact~\ref{fact:varent}.

\section{Concentration of the Intrinsic Volume Random Variable}
\label{sec:concentration}

The square root of the variance of the intrinsic volume random variable
$Z_{\mathsf{K}}$ gives the scale for fluctuations about the mean.
These fluctuations have size $O(\sqrt{n})$, which is
much smaller than the $O(n)$ range of the random variable.
This observation motivates us to
investigate the concentration properties of $Z_{\mathsf{K}}$.
In this section, we develop a refined version of the tail bound
from Theorem~\ref{thm:intvol-intro}.

\begin{theorem}[Tail Bounds for Intrinsic Volumes] \label{thm:intvol-conc}
Let $\mathsf{K} \subset \R^n$ be a nonempty convex body with intrinsic volume
random variable $Z_{\mathsf{K}}$.  For all $t \geq 0$, we have the inequalities
$$
\begin{aligned}
\Prob{ Z_{\mathsf{K}} - \Expect Z_{\mathsf{K}} \geq t }
	&\leq \exp\left\{ - (n + \Expect Z_{\mathsf{K}}) \cdot \psi^*\left( \frac{t}{n + \Expect Z_{\mathsf{K}}} \right) \right\}; \\
\Prob{ Z_{\mathsf{K}} - \Expect Z_{\mathsf{K}} \leq -t }
	&\leq \exp\left\{ - (n + \Expect Z_{\mathsf{K}}) \cdot \psi^*\left( \frac{-t}{n + \Expect Z_{\mathsf{K}}} \right) \right\}.
\end{aligned}
$$
The function $\psi^*(s) := ((1+s)\log(1+s) - s)/2$ for $s > -1$.
\end{theorem}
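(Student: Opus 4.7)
The plan is to run a standard Chernoff/Cram\'er argument: control the moment-generating function of $Z_{\mathsf{K}} - \Expect Z_{\mathsf{K}}$ and then optimize over the free parameter. The starting point is the MGF identity~\eqref{eqn:mgf} of Example~\ref{ex:gf},
\[
\Expect \econst^{\theta(Z_{\mathsf{K}} - n)} = \Expect \econst^{-(\econst^{2\theta}-1) H_{\mathsf{K}}}.
\]
Combined with the mean relation $n - \Expect Z_{\mathsf{K}} = 2\,\Expect H_{\mathsf{K}}$ from~\eqref{eqn:ZK-HK}, this lets me rewrite, for any real $\theta$,
\[
\log \Expect \econst^{\theta(Z_{\mathsf{K}} - \Expect Z_{\mathsf{K}})} = 2\theta\,\Expect H_{\mathsf{K}} + \log \Expect \econst^{-u H_{\mathsf{K}}}, \qquad u := \econst^{2\theta} - 1 > -1.
\]
So all of the work is in bounding the MGF of $H_{\mathsf{K}}$.

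The key ingredient is a sub-gamma MGF bound on the shifted information content $H_{\mathsf{K}}$ of the log-concave density $\mu_{\mathsf{K}}$, namely
\[
\log \Expect \econst^{-u(H_{\mathsf{K}} - \Expect H_{\mathsf{K}})} \leq n\,\bigl(u - \log(1+u)\bigr) \quad \text{for all } u > -1.
\]
Since $H_{\mathsf{K}}$ agrees with the information content $I_{\mu_{\mathsf{K}}}$ up to an additive constant, this says that $I_{\mu_{\mathsf{K}}} - \Expect I_{\mu_{\mathsf{K}}}$ is sub-gamma with shape and rate both equal to $n$, i.e.\ it is dominated in MGF by a sum of $n$ independent centered $\Gamma(1,1)$ variables. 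This is the full exponential-moment analogue of Fact~\ref{fact:varent}, and one obtains it by appealing to a sharp concentration-of-information theorem in the Bobkov--Madiman / Nguyen / Wang line of work, or equivalently by rerunning the semigroup/localization scheme behind Fact~\ref{fact:varent} and tracking every exponential moment rather than only the second.

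Assembling is then essentially bookkeeping. Substituting the sub-gamma bound into the display above and using $n - \Expect H_{\mathsf{K}} = (n + \Expect Z_{\mathsf{K}})/2$, the $\Expect H_{\mathsf{K}}$ terms cancel and one is left with the Bennett-type MGF inequality
\[
\log \Expect \econst^{\theta(Z_{\mathsf{K}} - \Expect Z_{\mathsf{K}})} \leq M\,\psi(\theta), \qquad M := n + \Expect Z_{\mathsf{K}}, \quad \psi(\theta) := \tfrac12\bigl(\econst^{2\theta} - 1 - 2\theta\bigr).
\]
A direct computation identifies the Legendre conjugate $\psi^{*}(s) = \tfrac12\bigl((1+s)\log(1+s) - s\bigr)$ for $s > -1$, exactly the function in the statement. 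Applying Markov's inequality and optimizing over $\theta > 0$ for the upper tail and over $\theta < 0$ for the lower tail produces both claimed bounds; the implicit constraint $t < M$ in the lower tail is absorbed into the natural domain of $\psi^{*}$.

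The main obstacle is the sub-gamma step. The varentropy bound of Fact~\ref{fact:varent} gives only the second moment, and upgrading it to the full exponential-moment inequality with the sharp pair of constants $(n,n)$ is where the real work lies: it requires exploiting the log-concavity of $\mu_{\mathsf{K}}$ considerably more strongly than Fact~\ref{fact:varent} does, whereas once this bound is in hand the passage to the Bennett tail for $Z_{\mathsf{K}}$ is just algebra plus Legendre duality.
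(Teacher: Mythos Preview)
Your proposal is correct and follows essentially the same route as the paper: the paper's Fact~\ref{fact:conc-info} (the Fradelizi--Madiman--Wang optimal concentration-of-information bound) is exactly your sub-gamma MGF inequality on $H_{\mathsf{K}}$ with the substitution $\beta = -u$, and Propositions~\ref{prop:mgf} and~\ref{prop:mgf-bd} carry out the same algebraic rewriting and Chernoff optimization you outline. The only difference is cosmetic---the paper parametrizes by $\beta = 1 - \econst^{2\theta}$ rather than your $u = \econst^{2\theta} - 1 = -\beta$.
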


The proof of this result follows the same pattern as the argument from Theorem~\ref{thm:intvol-var}.
In Section~\ref{sec:pf-main}, we derive Theorem~\ref{thm:intvol-conc} as an immediate consequence.

\subsection{Moment Generating Function of the Information Content}

In addition to the variance, one may study other moments of the information
content random variable.  In particular, bounds for the moment generating function (mgf) of
the centered information content lead to exponential tail bounds
for the information content.  Bobkov \& Madiman~\cite{BM11:Concentration-Information}
proved the first result in this direction.  More recently,
Fradelizi et al.~\cite{FMW16:Optimal-Concentration} have
obtained the optimal bound.

\begin{fact}[Information Content Mgf] \label{fact:conc-info}
Let $\mu : \R^n \to \R_+$ be a log-concave probability density.  For $\beta < 1$,
$$
\Expect \econst^{ \beta (I_{\mu} - \Expect I_{\mu}) }
	\leq \econst^{ n \phi(\beta) },
$$
where $\phi(s) := - s - \log(1-s)$ for $s < 1$.
The information content random variable $I_{\mu}$ is defined in~\eqref{eqn:info-cont}.
\end{fact}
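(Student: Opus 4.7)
The plan is to convert the MGF of the information content into a one-dimensional integral parametrized by the convex geometry of the super-level sets of $\mu$, and then to extremize this integral over the admissible class, showing that the gamma distribution $\mathrm{Gamma}(n,1)$ achieves the worst case.

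First I would rewrite $\Expect \econst^{\beta I_\mu} = \int_{\R^n} \mu(\vct{y})^{1-\beta}\idiff{\vct{y}}$, so the target becomes $\log \int \mu^{1-\beta} \leq \beta \Expect I_\mu + n\phi(\beta)$. Set $L(u) := \mathrm{Vol}_n\bigl(\{\vct{y} : \mu(\vct{y}) \geq \econst^{-u}\}\bigr)$. Log-concavity of $\mu$ makes its super-level sets convex, so the Brunn--Minkowski inequality yields $L(\lambda s + (1-\lambda)t)^{1/n} \geq \lambda L(s)^{1/n} + (1-\lambda) L(t)^{1/n}$; in particular, $L^{1/n}$ is concave on its support. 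A direct computation (using $P(I_\mu \leq u) = \int_{\{\mu \geq \econst^{-u}\}} \mu$ and Fubini) shows that $I_\mu$ has density $p_{I_\mu}(u) = \econst^{-u} L'(u)$ with respect to Lebesgue measure on $\R$, whence
$$\Expect \econst^{\beta I_\mu} = \int \econst^{-(1-\beta)u} L'(u)\idiff{u}, \quad \Expect I_\mu = \int u\,\econst^{-u} L'(u)\idiff{u}, \quad \int \econst^{-u} L'(u) \idiff{u} = 1.$$
After a translation in $u$ (which leaves the centered MGF on the left-hand side invariant) I may normalize to $\Expect I_\mu = n$. The problem reduces to: among all non-decreasing $L \colon \R \to \R_+$ with $L^{1/n}$ concave and the two integral constraints above, maximize $\int \econst^{-(1-\beta)u} L'(u)\idiff{u}$. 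The candidate extremum is $L(u) = (u - u_0)_+^n / n!$, corresponding to $\mu$ being the product of $n$ i.i.d.\ standard exponential densities (on $\R_+^n$), so that $I_\mu \sim \mathrm{Gamma}(n, 1)$. A direct calculation then gives $\Expect \econst^{\beta I_\mu} = (1-\beta)^{-n}$ and $\Expect \econst^{\beta(I_\mu - \Expect I_\mu)} = \econst^{-n\beta}(1-\beta)^{-n} = \econst^{n\phi(\beta)}$, matching the stated bound with equality.

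The main obstacle is justifying that the affine choice $L^{1/n}(u) = (u - u_0)_+$ really is the extremizer: the concavity constraint defines a convex set whose extreme structure needs to be exploited. The cleanest route I would pursue is to upgrade the MGF inequality to a convex-order domination $I_\mu - \Expect I_\mu \leq_{\mathrm{cx}} G - n$ with $G \sim \mathrm{Gamma}(n,1)$; the MGF bound then follows by applying the convex function $t \mapsto \econst^{\beta t}$. This convex-order comparison can be established by a continuous deformation argument that replaces $L^{1/n}$ by its piecewise-linear chord interpolants on a refining partition of its support, each step preserving the normalization and the mean while convex-order-increasing the distribution of $I_\mu$; passing to the limit yields the affine extremizer. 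The essential structural input throughout is Brunn--Minkowski; beyond that, only careful integration by parts is required.
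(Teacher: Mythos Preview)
The paper does not prove this statement; it is recorded as a known fact from Fradelizi--Madiman--Wang~\cite{FMW16:Optimal-Concentration} (building on~\cite{Ngu13:Inegalites-Fonctionelles,Wan14:Heat-Capacity}). So there is nothing in the present paper to compare against; the relevant comparison is with that reference.

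Your reduction is correct and is essentially the one used in~\cite{FMW16:Optimal-Concentration}: rewrite $\Expect\,\econst^{\beta I_\mu}=\int\mu^{1-\beta}$, introduce the super-level volume $L(u)=\mathrm{Vol}_n\{\mu\ge\econst^{-u}\}$, combine log-concavity of $\mu$ with Brunn--Minkowski to obtain concavity of $L^{1/n}$, and observe that $I_\mu$ has density $\econst^{-u}L'(u)$. The identification of the product of standard exponentials (so that $I_\mu\sim\mathrm{Gamma}(n,1)$) as the equality case, and the check that it gives $\econst^{n\phi(\beta)}$, are also correct.

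The gap is in the extremization step, which you yourself flag as the main obstacle. Replacing the concave function $L^{1/n}$ by a chord interpolant on an interval strictly \emph{decreases} $L$ pointwise (chords lie below a concave graph); after integration by parts the normalization $\int\econst^{-u}L'(u)\,\diff{u}=\int\econst^{-u}L(u)\,\diff{u}$ therefore strictly decreases, and the mean constraint is likewise destroyed. Once you rescale to restore the two constraints there is no evident mechanism forcing the move to be convex-order increasing, and your sketch supplies none. Note also that the convex-order domination $I_\mu-\Expect I_\mu\preceq_{\mathrm{cx}}G-n$ is strictly stronger than the MGF bound you are asked to prove (the Gamma side is infinite for $\beta\ge1$, so exponential test functions with $\beta<1$ do not recover all convex test functions), so you are attempting more than is required. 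In~\cite{FMW16:Optimal-Concentration} the one-dimensional step is handled by a short direct analytic argument rather than by a deformation to the affine case; that is where you should look to close the gap.
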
 

\subsection{Information Content and Intrinsic Volumes}

We extract concentration inequalities for the intrinsic volume random
variable $Z_{\mathsf{K}}$ by studying its (centered) exponential moments.
Define
$$
m_{\mathsf{K}}(\theta) := \Expect \econst^{ \theta(Z_{\mathsf{K}} - \Expect Z_{\mathsf{K}}) }
\quad\text{for $\theta \in \R$.}
$$
The first step in the argument is to represent the mgf in terms of the information
content random variable $H_{\mathsf{K}}$ defined in~\eqref{eqn:info-cont-K}.

\begin{proposition}[Mgf of Intrinsic Volume Random Variable] \label{prop:mgf}
Let $\mathsf{K} \subset \R^n$ be a nonempty convex body with intrinsic volume random variable $Z_{\mathsf{K}}$
and information content random variable $H_{\mathsf{K}}$.  For $\theta \in \R$,
$$
m_{\mathsf{K}}(\theta) = \econst^{ - \phi(\beta) \Expect H_{\mathsf{K}} } \cdot
	\Expect \econst^{ \beta (H_{\mathsf{K}} - \Expect H_{\mathsf{K}}) }
	\quad\text{where}\quad
	\beta := 1 - \econst^{2\theta}.
$$
The function $\phi$ is defined in Fact~\ref{fact:conc-info}.
\end{proposition}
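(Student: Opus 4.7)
The plan is to start from the integral representation of the moment generating function in equation~\eqref{eqn:mgf} of Example~\ref{ex:gf}, recognize that the integral on the right-hand side is itself an exponential moment of the information content $H_{\mathsf{K}}$, and then pay off the bookkeeping using the identity $\Expect Z_{\mathsf{K}} = n - 2 \Expect H_{\mathsf{K}}$ from~\eqref{eqn:ZK-HK} together with the elementary calculus relation $-\phi(\beta) = \beta + 2\theta$ when $\beta = 1 - \econst^{2\theta}$.

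First I would rewrite $m_{\mathsf{K}}(\theta) = \econst^{-\theta \Expect Z_{\mathsf{K}}} \Expect \econst^{\theta Z_{\mathsf{K}}}$. Applying~\eqref{eqn:mgf} gives
\begin{equation*}
\Expect \econst^{\theta Z_{\mathsf{K}}} = \econst^{\theta n} \cdot \frac{1}{W(\mathsf{K})} \int_{\R^n} \econst^{-\econst^{2\theta} \pi \dist^2(\vct{x}, \mathsf{K})} \idiff{\vct{x}}.
\end{equation*}
Next I would compute the exponential moment of $H_{\mathsf{K}}$ directly from~\eqref{eqn:info-cont-K} and the definition~\eqref{eqn:muK} of $\mu_{\mathsf{K}}$:
\begin{equation*}
\Expect \econst^{\beta H_{\mathsf{K}}} = \frac{1}{W(\mathsf{K})} \int_{\R^n} \econst^{\beta \pi \dist^2(\vct{x}, \mathsf{K})} \econst^{-\pi \dist^2(\vct{x}, \mathsf{K})} \idiff{\vct{x}} = \frac{1}{W(\mathsf{K})} \int_{\R^n} \econst^{-(1-\beta) \pi \dist^2(\vct{x}, \mathsf{K})} \idiff{\vct{x}}.
\end{equation*}
Choosing $\beta = 1 - \econst^{2\theta}$ so that $1 - \beta = \econst^{2\theta}$ makes the two integrals literally the same, and hence $\Expect \econst^{\theta Z_{\mathsf{K}}} = \econst^{\theta n} \Expect \econst^{\beta H_{\mathsf{K}}}$.

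The final step is to center both random variables and collect exponents. Using $\Expect Z_{\mathsf{K}} = n - 2 \Expect H_{\mathsf{K}}$ to rewrite $\econst^{-\theta \Expect Z_{\mathsf{K}}} \econst^{\theta n} = \econst^{2\theta \Expect H_{\mathsf{K}}}$, and pulling the centering constant out of $\Expect \econst^{\beta H_{\mathsf{K}}} = \econst^{\beta \Expect H_{\mathsf{K}}} \Expect \econst^{\beta(H_{\mathsf{K}} - \Expect H_{\mathsf{K}})}$, I obtain
\begin{equation*}
m_{\mathsf{K}}(\theta) = \econst^{(2\theta + \beta) \Expect H_{\mathsf{K}}} \cdot \Expect \econst^{\beta(H_{\mathsf{K}} - \Expect H_{\mathsf{K}})}.
\end{equation*}
Since $\log(1-\beta) = 2\theta$, the definition $\phi(\beta) = -\beta - \log(1-\beta)$ gives $-\phi(\beta) = \beta + 2\theta$, finishing the identification.

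There is no real obstacle here; the result is an algebraic restatement of Example~\ref{ex:gf}. The only thing to watch is the sign and substitution in $\beta = 1 - \econst^{2\theta}$, whose range is $(-\infty, 1)$ as $\theta$ varies over $\R$, matching the domain $\beta < 1$ of $\phi$ that will be needed when Fact~\ref{fact:conc-info} is applied downstream.
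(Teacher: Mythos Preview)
Your proof is correct and follows essentially the same route as the paper: invoke the generating-function identity~\eqref{eqn:mgf} to identify $\Expect \econst^{\theta Z_{\mathsf{K}}}$ with an exponential moment of $H_{\mathsf{K}}$, use~\eqref{eqn:ZK-HK} to trade $\Expect Z_{\mathsf{K}}$ for $\Expect H_{\mathsf{K}}$, center, and then recognize $\beta + 2\theta = \beta + \log(1-\beta) = -\phi(\beta)$. The only cosmetic difference is that the paper starts from $\Expect \econst^{\theta(Z_{\mathsf{K}} - n)}$ rather than $\econst^{-\theta\Expect Z_{\mathsf{K}}}\Expect \econst^{\theta Z_{\mathsf{K}}}$, which amounts to the same bookkeeping.
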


\begin{proof}
The formula~\eqref{eqn:mgf} from Example~\ref{ex:gf} yields the identity
$$
\Expect \econst^{\theta (Z_{\mathsf{K}} - n)}
	= \frac{1}{W(\mathsf{K})} \int_{\R^n} \econst^{ (1 -\econst^{2\theta}) \cdot \pi \dist^2( \vct{x}, \mathsf{K} ) }
		\cdot \econst^{-\pi \dist^2( \vct{x}, \mathsf{K})} \idiff{\vct{x}}
	= \Expect \econst^{ (1 - \econst^{2\theta}) H_{\mathsf{K}} }.
$$
We can transfer this result to obtain another representation for $m_{\mathsf{K}}$. 
First, use the identity~\eqref{eqn:ZK-HK} to replace $\Expect Z_{\mathsf{K}}$
with $\Expect H_{\mathsf{K}}$. Then invoke the last display to reach
$$
\begin{aligned}
m_{\mathsf{K}}(\theta) = \Expect \econst^{\theta (Z_{\mathsf{K}} - \Expect Z_{\mathsf{K}})}
	&= \econst^{2 \theta \Expect H_{\mathsf{K}}} \Expect \econst^{\theta (Z_{\mathsf{K}} - n) } \\
	&= \econst^{2 \theta \Expect H_{\mathsf{K}}} \Expect \econst^{(1 - \econst^{2\theta}) H_{\mathsf{K}}} \\
	&= \econst^{ (1 + 2\theta - \econst^{2\theta}) \Expect H_{\mathsf{K}} } \Expect \econst^{(1 - \econst^{2\theta}) (H_{\mathsf{K}} - \Expect H_{\mathsf{K}})}
	= \econst^{ (\beta + \log(1-\beta)) \Expect H_{\mathsf{K}} } \Expect \econst^{\beta (H_{\mathsf{K}} - \Expect H_{\mathsf{K}})}.
\end{aligned}
$$
In the last step, we have made the change of variables $\beta = 1 - \econst^{2\theta}$.
Finally, identify the value $-\phi(\beta)$ in the first exponent.
\end{proof}

\subsection{A Bound for the Mgf}

We are now prepared to bound the mgf $m_{\mathsf{K}}$.  This result will lead
directly to concentration of the intrinsic volume random variable.

\begin{proposition}[A Bound for the Mgf] \label{prop:mgf-bd}
Let $\mathsf{K} \subset \R^n$ be a nonempty convex body with intrinsic volume random variable $Z_{\mathsf{K}}$.
For $\theta \in \R$,
$$
m_{\mathsf{K}}(\theta) \leq \econst^{\psi(\theta) (n + \Expect Z_{\mathsf{K}})},
$$
where $\psi(s) := (\econst^{2s} - 2 s - 1) / 2$ for $s \in \R$.
\end{proposition}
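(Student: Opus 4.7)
The plan is to combine Proposition~\ref{prop:mgf} with Fact~\ref{fact:conc-info} and then rewrite everything in terms of $\Expect Z_{\mathsf{K}}$ using the identity~\eqref{eqn:ZK-HK}. The bulk of the work has already been done in setting up the mgf representation; what remains is a short algebraic verification.

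First, I would start from the representation
$$
m_{\mathsf{K}}(\theta) = \econst^{-\phi(\beta)\Expect H_{\mathsf{K}}} \cdot \Expect \econst^{\beta(H_{\mathsf{K}} - \Expect H_{\mathsf{K}})}, \qquad \beta := 1 - \econst^{2\theta},
$$
given by Proposition~\ref{prop:mgf}. Note that $\beta < 1$ for every $\theta \in \R$, so Fact~\ref{fact:conc-info} is applicable to the log-concave density $\mu_{\mathsf{K}}$. Moreover, the central-moment identity~\eqref{eqn:central-moments} shows that $\Expect \econst^{\beta(H_{\mathsf{K}} - \Expect H_{\mathsf{K}})} = \Expect \econst^{\beta(I_{\mu_{\mathsf{K}}} - \Expect I_{\mu_{\mathsf{K}}})}$, so Fact~\ref{fact:conc-info} yields
$$
\Expect \econst^{\beta(H_{\mathsf{K}} - \Expect H_{\mathsf{K}})} \leq \econst^{n \phi(\beta)}.
$$
Since $\phi(\beta) \geq 0$ for all $\beta < 1$ (as $\phi(0)=0$ and $\phi$ is convex with minimum at $0$), the factor $\econst^{-\phi(\beta)\Expect H_{\mathsf{K}}}$ is compatible with the inequality direction, giving
$$
m_{\mathsf{K}}(\theta) \leq \econst^{\phi(\beta)(n - \Expect H_{\mathsf{K}})}.
$$

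Next, I would convert both $\phi(\beta)$ and $n - \Expect H_{\mathsf{K}}$ into the form required by the proposition. From~\eqref{eqn:ZK-HK}, $\Expect H_{\mathsf{K}} = (n - \Expect Z_{\mathsf{K}})/2$, hence $n - \Expect H_{\mathsf{K}} = (n + \Expect Z_{\mathsf{K}})/2$. For the exponent coefficient, plugging $\beta = 1 - \econst^{2\theta}$ into $\phi(s) = -s - \log(1-s)$ gives $\log(1-\beta) = 2\theta$ and therefore
$$
\phi(\beta) = -(1 - \econst^{2\theta}) - 2\theta = \econst^{2\theta} - 2\theta - 1 = 2\psi(\theta).
$$
Substituting these two identities into the bound produces exactly $m_{\mathsf{K}}(\theta) \leq \econst^{\psi(\theta)(n + \Expect Z_{\mathsf{K}})}$, as required.

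There is no real obstacle here: the heavy lifting is packaged into Proposition~\ref{prop:mgf} (which translates the intrinsic-volume mgf into the information-content mgf) and into Fact~\ref{fact:conc-info} (the sharp varentropy-type bound of Fradelizi--Madiman--Wang). The only points that need a moment of care are verifying that $\beta < 1$ so Fact~\ref{fact:conc-info} is applicable, checking the sign of $\phi(\beta)$ so the inequality is not reversed when multiplied through, and the elementary identification $\phi(\beta) = 2\psi(\theta)$ under the change of variables $\beta = 1 - \econst^{2\theta}$.
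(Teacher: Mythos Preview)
Your proposal is correct and follows essentially the same route as the paper's proof: start from Proposition~\ref{prop:mgf}, replace $H_{\mathsf{K}}$ by $I_{\mu_{\mathsf{K}}}$ via~\eqref{eqn:central-moments}, apply Fact~\ref{fact:conc-info}, and then use~\eqref{eqn:ZK-HK} together with the identity $\phi(1-\econst^{2\theta}) = 2\psi(\theta)$ to rewrite the exponent. The only superfluous step is your remark that $\phi(\beta)\geq 0$ ensures the inequality direction is preserved; since $\econst^{-\phi(\beta)\Expect H_{\mathsf{K}}}$ is a positive scalar regardless of the sign of the exponent, multiplying through never reverses the inequality, so that observation is unnecessary (though harmless).
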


\begin{proof}
For the parameter $\beta = 1 - \econst^{2\theta}$, Proposition~\ref{prop:mgf} yields
$$
\begin{aligned}
m_{\mathsf{K}}(\theta) &= \econst^{-\phi(\beta) \Expect H_{\mathsf{K}}}
	\Expect \econst^{\beta (H_{\mathsf{K}} - \Expect H_{\mathsf{K}})} \\
	&= \econst^{-\phi(\beta) \Expect H_{\mathsf{K}}}
	\Expect \econst^{\beta (I_{\mu_{\mathsf{K}}} - \Expect I_{\mu_{\mathsf{K}}})} \\
	&\leq \econst^{-\phi(\beta)\Expect H_{\mathsf{K}}}
	\cdot \econst^{n \phi(\beta)} \\
	&= \econst^{-\phi(\beta) (n - \Expect Z_{\mathsf{K}}) / 2}
	\cdot\econst^{n \phi(\beta)} 
	= \econst^{\phi(\beta) (n + \Expect Z_{\mathsf{K}}) / 2}.
\end{aligned}
$$
To reach the second line, we use the equivalence~\eqref{eqn:central-moments}
for the central moments.  The inequality is Fact~\ref{fact:conc-info},
the mgf bound for the information content $I_{\mu_{\mathsf{K}}}$ of the log-concave density $\mu_{\mathsf{K}}$.
Afterward, we invoke~\eqref{eqn:ZK-HK} to pass from the information content
random variable $H_{\mathsf{K}}$ to the intrinsic volume random variable $Z_{\mathsf{K}}$.
The next step is algebraic.  The result follows when we return from the variable
$\beta$ to the variable $\theta$, leading to the appearance of the function $\psi$.
\end{proof}

\subsection{Proof of Theorem~\ref{thm:intvol-conc}}

The Laplace transform method,
combined with the mgf bound from Proposition~\ref{prop:mgf-bd},
produces Bennett-type inequalities for the intrinsic volume random variable.
In brief,
$$
\begin{aligned}
\Prob{ Z_{\mathsf{K}} - \Expect Z_{\mathsf{K}} \geq t }
	&\leq \inf_{\theta > 0} \econst^{-\theta t} \cdot m_{\mathsf{K}}(\theta) \\
	&\leq \inf_{\theta > 0} \econst^{-\theta t + \psi(\theta) (n + \Expect Z_{\mathsf{K}})}
	= \exp\left\{ - (n + \Expect Z_{\mathsf{K}}) \cdot \psi^*\left( \frac{t}{n + \Expect Z_{\mathsf{K}}} \right) \right\}.
\end{aligned}
$$
The Fenchel--Legendre conjugate $\psi^*$ of the function $\psi$
has the explicit form given
in the statement of Theorem~\ref{thm:intvol-conc}.
The lower tail bound follows from the same argument.

\subsection{Proof of Theorem~\ref{thm:intvol-intro}}
\label{sec:pf-main}

The concentration inequality in the main result, Theorem~\ref{thm:intvol-conc},
follows when we weaken the inequalities obtained in the last section.
Comparing derivatives, we can verify that $\psi^*(s) \geq (s^2/4)/(1 + s/3)$
for all $s > -1$.  For the interesting range, $0 \leq t \leq n$, we have
$$
\begin{aligned}
\Prob{ Z_{\mathsf{K}} - \Expect Z_{\mathsf{K}} \geq t }
	&\leq \exp\left\{ \frac{ -t^2/4}{n + \Expect Z_{\mathsf{K}} + t / 3} \right\}; \\
\Prob{ Z_{\mathsf{K}} - \Expect Z_{\mathsf{K}} \leq -t }
	&\leq \exp\left\{ \frac{ -t^2/4}{n + \Expect Z_{\mathsf{K}} - t / 3} \right\}.
\end{aligned}
$$
We may combine this pair of inequalities into a single bound:
$$
\Prob{ \abs{Z_{\mathsf{K}} - \Expect Z_{\mathsf{K}}} \geq t }
	\leq 2 \exp\left( \frac{ -t^2/4}{n + \Expect Z_{\mathsf{K}} + t / 3} \right).
$$
Make the estimate $\Expect Z_{\mathsf{K}} \leq n$, and bound the denominator
using $t \leq n$.  This completes the argument.

\section{Example: Rectangular Parallelotopes}
\label{sec:parallelotope}

In this section, we work out the intrinsic volume sequence of a rectangular parallelotope.
This computation involves the generating function of the intrinsic volume
sequence.  Because of its elegance, we develop this method in more depth
than we need to treat the example at hand.

\subsection{Generating Functions and Intrinsic Volumes}

To begin, we collect some useful information about the properties
of the generating function of the intrinsic volumes.

\begin{definition}[Intrinsic Volume Generating Function]
The \emph{generating function}
of the intrinsic volumes
of the convex body $\mathsf{K}$
is the polynomial
$$
G_{\mathsf{K}}(\lambda) := 
	\sum_{j=0}^n \lambda^j V_j(\mathsf{K})
	= W(\lambda \mathsf{K})
	\quad\text{for $\lambda > 0$.}
$$
\end{definition}

We can use the generating function to read off some information about
a convex body, including the total intrinsic volume and the central intrinsic volume.
This is a standard result~\cite[Sec.~4.1]{Wil94:Generatingfunctionology},
so we omit the elementary argument.

\begin{proposition}[Properties of the Generating Function] \label{prop:gf}
For each nonempty convex body $\mathsf{K} \subset \R^n$,
$$
W(\mathsf{K}) = G_{\mathsf{K}}(1)
\quad\text{and}\quad
\Delta(\mathsf{K}) = \frac{G_{\mathsf{K}}'(1)}{G_{\mathsf{K}}(1)}
	= (\log G_{\mathsf{K}})'(1).
$$
As usual, the prime $'$ denotes a derivative.
\end{proposition}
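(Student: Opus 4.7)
The plan is to derive both identities directly from the definition of the generating function by elementary manipulations.

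First I would establish $W(\mathsf{K}) = G_{\mathsf{K}}(1)$ by substituting $\lambda = 1$ into the polynomial
$$
G_{\mathsf{K}}(\lambda) = \sum_{j=0}^n \lambda^j V_j(\mathsf{K}).
$$
The right-hand side at $\lambda = 1$ collapses to $\sum_{j=0}^n V_j(\mathsf{K})$, which is exactly the total intrinsic volume $W(\mathsf{K})$ from Definition \ref{def:wills}.

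Next, for the centroid identity, I would differentiate $G_{\mathsf{K}}$ termwise. Since $G_{\mathsf{K}}$ is a polynomial in $\lambda$ with coefficients $V_j(\mathsf{K})$, we get
$$
G_{\mathsf{K}}'(\lambda) = \sum_{j=0}^n j \lambda^{j-1} V_j(\mathsf{K}),
$$
and evaluating at $\lambda = 1$ yields $G_{\mathsf{K}}'(1) = \sum_{j=0}^n j \cdot V_j(\mathsf{K})$. Dividing by $G_{\mathsf{K}}(1) = W(\mathsf{K})$ and invoking the definition \eqref{eqn:centroid} of the central intrinsic volume,
$$
\frac{G_{\mathsf{K}}'(1)}{G_{\mathsf{K}}(1)} = \sum_{j=0}^n j \cdot \frac{V_j(\mathsf{K})}{W(\mathsf{K})} = \sum_{j=0}^n j \cdot \tilde{V}_j(\mathsf{K}) = \Delta(\mathsf{K}).
$$
The rightmost equality $G_{\mathsf{K}}'(1)/G_{\mathsf{K}}(1) = (\log G_{\mathsf{K}})'(1)$ is just the chain rule applied to $\log G_{\mathsf{K}}$, which is well-defined in a neighborhood of $\lambda = 1$ because $G_{\mathsf{K}}(1) = W(\mathsf{K}) \geq V_0(\mathsf{K}) = 1 > 0$ for any nonempty convex body.

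There is no real obstacle here: the computation is purely formal, relying only on the definitions of $G_{\mathsf{K}}$, $W$, and $\Delta$, plus the positivity of $W(\mathsf{K})$ to justify taking the logarithm. This is precisely why the authors describe the argument as elementary and refer the reader to standard generating-function references.
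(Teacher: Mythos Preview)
Your proposal is correct and is precisely the elementary argument the paper alludes to; the paper in fact omits the proof entirely, citing it as a standard generating-function computation, and your write-up fills in exactly those routine steps.
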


It is usually challenging to compute the intrinsic volumes of a convex body,
but the following fact allows us to make short work of some examples.

\begin{fact}[Direct Products] \label{fact:direct-prod}
Let $\mathsf{C} \subset \R^{n_1}$ and $\mathsf{K} \subset \R^{n_2}$
be nonempty convex bodies.
The generating function of the intrinsic volumes of the convex body
$\mathsf{C} \times \mathsf{K} \subset \R^{n_1 + n_2}$ takes the form
$$
G_{\mathsf{C} \times \mathsf{K}}(\lambda)
	= G_{\mathsf{C}}(\lambda) \cdot G_{\mathsf{K}}(\lambda).
$$
\end{fact}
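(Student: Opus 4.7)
The plan is to prove the multiplicativity of $G_{\mathsf{K}}$ under orthogonal direct products by exploiting the integral representation of the generating function derived in Example~\ref{ex:gf}. Specifically, equation~\eqref{eqn:gf} tells us that, for any $\lambda > 0$,
$$
G_{\mathsf{K}}(\lambda) = \lambda^{n} \int_{\R^n} \econst^{-\lambda^2 \pi \dist^2(\vct{x}, \mathsf{K})} \idiff{\vct{x}}.
$$
This transfers the problem from a statement about the abstract polynomial of intrinsic volumes to a statement about a Gaussian-type integral, which is precisely the kind of object that factors cleanly under Cartesian products.

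The first key step is the elementary geometric observation that the squared distance to an orthogonal direct product splits as a sum of squared distances to the factors:
$$
\dist^2\bigl( (\vct{x}, \vct{y}), \, \mathsf{C} \times \mathsf{K} \bigr) = \dist^2(\vct{x}, \mathsf{C}) + \dist^2(\vct{y}, \mathsf{K})
\quad\text{for } (\vct{x}, \vct{y}) \in \R^{n_1} \times \R^{n_2}.
$$
This is immediate from the definition of the distance in Definition~\ref{def:dist}: the optimal nearest point to $(\vct{x}, \vct{y})$ in $\mathsf{C} \times \mathsf{K}$ is $(\vct{x}^\star, \vct{y}^\star)$, where $\vct{x}^\star$ and $\vct{y}^\star$ are the nearest points in $\mathsf{C}$ and $\mathsf{K}$ respectively, because the coordinates decouple in the Pythagorean identity $\enormsq{(\vct{x}-\vct{u}, \vct{y}-\vct{v})} = \enormsq{\vct{x}-\vct{u}} + \enormsq{\vct{y}-\vct{v}}$.

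The second step is to apply the integral representation to $\mathsf{C} \times \mathsf{K}$ in ambient dimension $n_1 + n_2$, substitute the distance decomposition, and use Fubini's theorem to factor the integral:
$$
\int_{\R^{n_1+n_2}} \econst^{-\lambda^2\pi\dist^2((\vct{x},\vct{y}),\mathsf{C}\times\mathsf{K})} \idiff{(\vct{x},\vct{y})}
= \left( \int_{\R^{n_1}} \econst^{-\lambda^2\pi\dist^2(\vct{x},\mathsf{C})} \idiff{\vct{x}} \right)
\cdot \left( \int_{\R^{n_2}} \econst^{-\lambda^2\pi\dist^2(\vct{y},\mathsf{K})} \idiff{\vct{y}} \right).
$$
Applying~\eqref{eqn:gf} to each of the three integrals and cleaning up the prefactor $\lambda^{n_1+n_2} = \lambda^{n_1}\cdot\lambda^{n_2}$ yields exactly $G_{\mathsf{C}\times\mathsf{K}}(\lambda) = G_{\mathsf{C}}(\lambda)\cdot G_{\mathsf{K}}(\lambda)$. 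Since this identity of polynomials holds for every $\lambda > 0$, it holds identically.

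There is no real obstacle here: the argument is essentially a one-line Fubini calculation once one has recognized that the intrinsic volume generating function is the Laplace-type transform on the right-hand side of~\eqref{eqn:gf}. The only thing that requires a sentence of justification is the distance decomposition for the orthogonal product, and that in turn hinges on the fact that the direct product is defined so that the two factor coordinates are orthogonal in $\R^{n_1+n_2}$. An alternative (more combinatorial) route would be to expand $V_k(\mathsf{C} \times \mathsf{K}) = \sum_{i+j=k} V_i(\mathsf{C}) V_j(\mathsf{K})$ directly from Hadwiger-type characterizations and then convolve, but the distance-integral proof is cleaner and uses only tools already developed in Section~\ref{sec:steiner}.
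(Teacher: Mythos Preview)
Your proof is correct and follows essentially the same route as the paper: both arguments use the integral representation~\eqref{eqn:gf}, the Pythagorean splitting of $\dist^2$ under the orthogonal product, and Fubini to factor the Gaussian-type integral, then cancel the leading powers of $\lambda$. The paper's version is slightly terser (it does not spell out the nearest-point justification for the distance decomposition), but the logic is identical.
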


For completeness, we include a short proof inspired by Hadwiger~\cite{Had75:Willssche};
see~\cite[Lem.~14.2.1]{SW08:Stochastic-Integral}.

\begin{proof}
Abbreviate $n := n_1 + n_2$.  For a point $\vct{x} \in \R^n$,
write $\vct{x} = (\vct{x}_1, \vct{x}_2)$ where $\vct{x}_i \in \R^{n_i}$.  Then
$$
\dist^2(\vct{x}, \mathsf{C} \times \mathsf{K})
	= \dist^2(\vct{x}_1, \mathsf{C}) + \dist^2(\vct{x}_2, \mathsf{K}).
$$
Invoke the formula~\eqref{eqn:gf} from Example~\ref{ex:gf} for the generating function of the intrinsic volumes
(three times!).  For $\lambda > 0$,
$$
\begin{aligned}
\lambda^{-n} \sum_{j=0}^{n} \lambda^j V_j(\mathsf{C} \times \mathsf{K})
	&= \int_{\R^{n}} \econst^{- \lambda^2 \pi \dist^2( \vct{x}, \mathsf{C} \times \mathsf{K} ) } \idiff{\vct{x}} \\
	&= \int_{\R^{n_1}} \int_{\R^{n_2}} \econst^{- \lambda^2 \pi \dist^2( \vct{x}_1, \mathsf{C} ) } \cdot
		\econst^{-\lambda^2 \pi \dist^2(\vct{x}_2, \mathsf{K}) } \idiff{\vct{x}_1} \idiff{\vct{x}_2} \\
	&=  \left( \lambda^{-n_1}\sum_{j=0}^{n_1} \lambda^j V_j(\mathsf{C}) \right)
	\left( \lambda^{-n_2}\sum_{j=0}^{n_2} \lambda^j V_j(\mathsf{K}) \right).
\end{aligned}
$$
Cancel the leading factors of $\lambda$ to complete the argument.
\end{proof}

As a corollary, we can derive an expression for the central intrinsic volume of a direct product.

\begin{corollary}[Central Intrinsic Volume of a Product] \label{cor:Delta-prod}
Let $\mathsf{C} \subset \R^{n_1}$ and $\mathsf{K} \subset \R^{n_2}$
be nonempty convex bodies.  Then
$$
\Delta( \mathsf{C} \times \mathsf{K} ) = \Delta( \mathsf{C} ) + \Delta(\mathsf{K} ).
$$
\end{corollary}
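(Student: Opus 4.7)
The plan is to derive the identity as a direct consequence of Fact~\ref{fact:direct-prod} together with the logarithmic-derivative characterization of the central intrinsic volume given in Proposition~\ref{prop:gf}. The key observation is that the central intrinsic volume is the logarithmic derivative of the generating function at $\lambda = 1$, and multiplicativity of the generating function over direct products translates, after taking logs, into additivity of the logarithmic derivative.

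In detail, I would first apply Fact~\ref{fact:direct-prod} to write
\[
G_{\mathsf{C} \times \mathsf{K}}(\lambda) = G_{\mathsf{C}}(\lambda) \cdot G_{\mathsf{K}}(\lambda).
\]
Since both generating functions are strictly positive in a neighborhood of $\lambda = 1$ (they take the value $W(\mathsf{C}), W(\mathsf{K}) > 0$ there), I can take logarithms to obtain
\[
\log G_{\mathsf{C} \times \mathsf{K}}(\lambda) = \log G_{\mathsf{C}}(\lambda) + \log G_{\mathsf{K}}(\lambda).
\]
Differentiating both sides in $\lambda$ and evaluating at $\lambda = 1$, and then invoking the second identity from Proposition~\ref{prop:gf}, namely $\Delta(\mathsf{K}) = (\log G_{\mathsf{K}})'(1)$, gives
\[
\Delta(\mathsf{C} \times \mathsf{K}) = (\log G_{\mathsf{C} \times \mathsf{K}})'(1)
= (\log G_{\mathsf{C}})'(1) + (\log G_{\mathsf{K}})'(1)
= \Delta(\mathsf{C}) + \Delta(\mathsf{K}).
\]

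There is essentially no obstacle: the entire content of the corollary is packaged into the two earlier results (the product formula for generating functions and the identification of $\Delta$ with a logarithmic derivative at $1$). The only thing to check carefully is that the logarithm is well-defined at $\lambda = 1$, which is automatic because $G_{\mathsf{C}}(1)$ and $G_{\mathsf{K}}(1)$ are the strictly positive total intrinsic volumes of nonempty convex bodies. Thus the proof will be a short, essentially one-line calculation following the above sequence of substitutions.
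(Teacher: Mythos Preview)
Your proposal is correct and follows essentially the same approach as the paper: both invoke Fact~\ref{fact:direct-prod} to factor the generating function, take logarithms, differentiate, evaluate at $\lambda=1$, and identify each term via Proposition~\ref{prop:gf}. The paper's proof is the one-line calculation you describe, and your added remark about positivity at $\lambda=1$ is a harmless elaboration.
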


\begin{proof}
According to Proposition~\ref{prop:gf} and Fact~\ref{fact:direct-prod},
$$
\begin{aligned}
\Delta(\mathsf{C} \times \mathsf{K})
	&= (\log G_{\mathsf{C} \times \mathsf{K}})'(1)
	= (\log (G_{\mathsf{C}} G_{\mathsf{K}}))'(1) \\
	&= (\log G_{\mathsf{C}} + \log G_{\mathsf{K}})'(1)
	= (\log G_{\mathsf{C}})'(1) + (\log G_{\mathsf{K}})'(1)
	= \Delta(\mathsf{C}) + \Delta(\mathsf{K}).
\end{aligned}
$$
This is what we needed to show.
\end{proof}

\subsection{Intrinsic Volumes of a Rectangular Parallelotope}

Using Fact~\ref{fact:direct-prod}, we quickly compute the intrinsic volumes and related
statistics for a rectangular parallelotope.

\begin{proposition}[Rectangular Parallelotopes] \label{prop:parallelotopes}
For parameters $s_1, s_2, \dots, s_n \geq 0$, construct the rectangular parallelotope
$$
\mathsf{P} := [0, s_1] \times [0, s_2] \times \dots \times [0, s_n] \subset \R^n.
$$
The generating function for the intrinsic volumes of the parallelotope $\mathsf{P}$ satisfies
$$
G_{\mathsf{P}}(\lambda) 
	= \prod_{i=1}^n (1 + \lambda s_i).
$$
In particular, $V_j(\mathsf{K}) = e_j(s_1, \dots, s_n)$,
where $e_j$ denotes the $j$th elementary symmetric function.
The total intrinsic volume and central intrinsic volume satisfy
$$
W(\mathsf{P}) = \prod_{i=1}^n (1 + s_i)
\quad\text{and}\quad
\Delta(\mathsf{P}) = \sum_{i=1}^n \frac{s_i}{1+s_i}.
$$
\end{proposition}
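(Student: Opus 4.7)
The plan is to reduce everything to the one-dimensional case and then invoke the multiplicativity of the generating function under orthogonal direct products.

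First, I would treat a single interval $\mathsf{I}_s := [0,s] \subset \R$. Since $\mathsf{I}_s$ is a nonempty convex body in $\R^1$, its intrinsic volumes are indexed by $j \in \{0,1\}$. The Euler characteristic gives $V_0(\mathsf{I}_s) = 1$, and $V_1(\mathsf{I}_s) = \mathrm{Vol}_1(\mathsf{I}_s) = s$ (from the intrinsic property, or directly from Definition~\ref{def:intvol} since the only $1$-dimensional subspace of $\R^1$ is $\R^1$ itself). Therefore
\[
G_{\mathsf{I}_s}(\lambda) = 1 + \lambda s.
\]

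Next, I would write the parallelotope as an iterated orthogonal direct product,
\[
\mathsf{P} = \mathsf{I}_{s_1} \times \mathsf{I}_{s_2} \times \cdots \times \mathsf{I}_{s_n},
\]
and apply Fact~\ref{fact:direct-prod} inductively ($n-1$ times) to obtain
\[
G_{\mathsf{P}}(\lambda) = \prod_{i=1}^n G_{\mathsf{I}_{s_i}}(\lambda) = \prod_{i=1}^n (1 + \lambda s_i).
\]
Expanding the product and matching coefficients of $\lambda^j$ in $G_{\mathsf{P}}(\lambda) = \sum_j \lambda^j V_j(\mathsf{P})$ immediately gives the elementary symmetric function identity $V_j(\mathsf{P}) = e_j(s_1,\dots,s_n)$.

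Finally, the total and central intrinsic volumes fall out from Proposition~\ref{prop:gf}: evaluating at $\lambda = 1$ yields $W(\mathsf{P}) = G_{\mathsf{P}}(1) = \prod_i(1+s_i)$, and
\[
\Delta(\mathsf{P}) = (\log G_{\mathsf{P}})'(1) = \sum_{i=1}^n \bigl(\log(1+\lambda s_i)\bigr)'\big|_{\lambda=1} = \sum_{i=1}^n \frac{s_i}{1+s_i}.
\]
Alternatively, one could invoke Corollary~\ref{cor:Delta-prod} inductively together with the trivial fact $\Delta(\mathsf{I}_{s_i}) = s_i/(1+s_i)$.

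There is essentially no obstacle here: the only thing to verify carefully is the base case for a single interval (which is immediate from the definition), since the multiplicativity under direct products does all the substantive work.
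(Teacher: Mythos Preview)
Your proposal is correct and follows essentially the same argument as the paper: compute the generating function of a single interval directly, apply Fact~\ref{fact:direct-prod} to the product, and read off $W$ and $\Delta$ via Proposition~\ref{prop:gf} (with the same alternative route through Corollary~\ref{cor:Delta-prod} noted). There is nothing to add.
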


\begin{proof}
Let $s \geq 0$.  By direct calculation from Definition~\ref{def:intvol},
the intrinsic volumes of the interval $[0,s] \subset \R^1$ are
$V_0([0,s]) = 1$ and $V_1([0,s]) = s$.  Thus,
$$
G_{[0,s]}(\lambda)
	= \sum_{j=0}^1 \lambda^j V_j( [0, s] )
	= 1 + \lambda s.
$$
Fact~\ref{fact:direct-prod} implies that
the generating function for the intrinsic volumes
of the parallelotope $\mathsf{P}$ is
$$
G_{\mathsf{P}}(\lambda) := \sum_{j=0}^n \lambda^j V_j(\mathsf{P})
	= \prod_{i=0}^n (1 + \lambda s_i ).
$$
We immediately obtain formulas for the total intrinsic volume
and the central intrinsic volume from Proposition~\ref{prop:gf}.
Alternatively, we can compute the central intrinsic volume
of an interval $[0,s]$ and use Corollary~\ref{cor:Delta-prod}
to extend this result to the parallelotope $\mathsf{P}$.
\end{proof}

\subsection{Intrinsic Volumes of a Cube}
\label{sec:cubes}

As an immediate consequence of Proposition~\ref{prop:parallelotopes},
we obtain a clean result on the intrinsic volumes of a scaled cube.

\begin{corollary}[Cubes] \label{cor:cubes}
Let $\mathsf{Q}_n \subset \R^n$ be the unit cube.  For $s \geq 0$,
the \emph{normalized} intrinsic volumes of the scaled cube $s \mathsf{Q}_n$
coincide with a binomial distribution.  For each $j = 0, 1, 2, \dots, n$,
$$
\tilde{V}_j(s \mathsf{Q}_n) = {n \choose j} \cdot p^j (1 - p)^{n-j}
\quad\text{where}\quad
p = \frac{s}{1+s}.
$$
In particular, the central intrinsic volume of the scaled cube is
$$
\Delta( s \mathsf{Q}_n ) = np = \frac{ns}{1+s}.
$$
\end{corollary}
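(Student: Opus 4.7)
The plan is to apply Proposition~\ref{prop:parallelotopes} to the special case where all edge lengths are equal, and then recognize the resulting probability mass function as a binomial distribution.

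First, I would observe that the scaled cube admits the factorization $s\mathsf{Q}_n = [0,s]^n = [0,s]\times[0,s]\times\cdots\times[0,s]$, i.e., a rectangular parallelotope with $s_1 = s_2 = \cdots = s_n = s$. Proposition~\ref{prop:parallelotopes} then immediately yields
\[
G_{s\mathsf{Q}_n}(\lambda) = \prod_{i=1}^n (1+\lambda s) = (1+\lambda s)^n,
\]
and, reading off coefficients by the binomial theorem, $V_j(s\mathsf{Q}_n) = \binom{n}{j} s^j$ for each $j = 0,1,\dots,n$. (This also matches Example~\ref{ex:cube-iv} when $s = 1$, providing a sanity check.)

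Next, I would compute the total intrinsic volume either directly from Proposition~\ref{prop:parallelotopes} or by evaluating the generating function at $\lambda = 1$:
\[
W(s\mathsf{Q}_n) = G_{s\mathsf{Q}_n}(1) = (1+s)^n.
\]
Dividing gives the normalized intrinsic volumes
\[
\tilde V_j(s\mathsf{Q}_n) = \frac{\binom{n}{j} s^j}{(1+s)^n} = \binom{n}{j}\left(\frac{s}{1+s}\right)^j\left(\frac{1}{1+s}\right)^{n-j} = \binom{n}{j} p^j (1-p)^{n-j},
\]
with $p = s/(1+s)$, as claimed.

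Finally, the formula for the central intrinsic volume follows either by invoking the elementary fact that a $\textsc{Bin}(n,p)$ random variable has mean $np$, or by applying Proposition~\ref{prop:parallelotopes} directly with all $s_i = s$, which gives $\Delta(s\mathsf{Q}_n) = \sum_{i=1}^n s/(1+s) = ns/(1+s)$. There is no genuine obstacle here: every step is a direct specialization of results already established, and the only ``creative'' move is to notice that $\binom{n}{j} s^j / (1+s)^n$ is exactly the binomial probability mass function with success probability $p = s/(1+s)$.
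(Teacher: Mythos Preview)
Your proposal is correct and matches the paper's approach exactly: the paper presents this corollary as an immediate consequence of Proposition~\ref{prop:parallelotopes}, and you have simply (and correctly) filled in the routine details of that specialization. There is nothing to add.
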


Corollary~\ref{cor:cubes} plays a starring role in our analysis of the intrinsic volume
sequences that attain the maximum entropy.

We can also use Corollary~\ref{cor:cubes} to test
our results on the variance and concentration properties
of the intrinsic volume sequence by comparing them with exact computations
for the cube.  Fix a number $s \geq 0$, and let $p = s/(1+s)$.  Then
$$
\Var[ Z_{s \mathsf{Q}_n} ] = np(1-p) = \frac{ns}{(1+s)^2}.
$$
Meanwhile, Theorem~\ref{thm:intvol-var} gives the upper bound
$$
\Var[ Z_{s \mathsf{Q}_n} ] \leq 2 (n + np) = \frac{2n (1+2s)}{1+s}.
$$
For $s = 1$, the ratio of the upper bound to the exact variance is 12.
For $s \approx 0$ and $s \to \infty$, the ratio becomes arbitrarily large.
Similarly, Theorem~\ref{thm:intvol-conc} gives a qualitatively good description
for $s = 1$, but its predictions are far less accurate for small and large $s$.
There remains more work to do!

\section{Maximum-Entropy Distributions of Intrinsic Volumes}
\label{sec:maxent}

We have been using probabilistic methods to study the intrinsic volumes
of a convex body, and we have seen that the intrinsic volume sequence
is concentrated, as reflected in the variance bound (Theorem~\ref{thm:intvol-var})
and the exponential tail bounds (Theorem~\ref{thm:intvol-conc}).
Therefore, it is natural to consider other measures of the dispersion of the sequence.
We recall Definition~\ref{def:intent}, of the \emph{intrinsic entropy},
which is the entropy of the normalized intrinsic volume sequence.
This concept turns out to be interesting.

In this section, we will establish Theorem~\ref{thm:maxent-intro}.
This result states that, among all convex bodies with a
fixed central intrinsic volume, a scaled cube has the largest entropy.
Moreover, the unit-volume cube has the largest intrinsic entropy
among all convex bodies in a fixed dimension.
We prove this theorem using some recent observations from information theory.

\subsection{Ultra-Log-Concavity and Convex Bodies}

The key step in proving Theorem~\ref{thm:maxent-intro} is to draw
a connection between intrinsic volumes and ultra-log-concave sequences.
We begin with an important definition.

\begin{definition}[Ultra-Log-Concave Sequence]
A nonnegative sequence $\{a_j : j = 0, 1, 2, \dots \}$ is called \emph{ultra-log-concave}, briefly \emph{ULC},
if it satisfies the relations
$$
j \cdot a_j^2 \geq (j+1) \cdot a_{j+1} a_{j-1}
\quad\text{for $j = 1, 2, 3, \dots$.}
$$
It is equivalent to say that the sequence $\{ j! \, a_j : j = 0, 1, 2, \dots \}$ is log-concave.
\end{definition}

Among all finitely supported ULC probability distributions, the binomial distributions
have the maximum entropy.  This result was obtained by
Yaming Yu~\cite{Yu08:Maximum-Entropy} using methods developed
by Oliver Johnson~\cite{Joh07:Log-Concavity-Maximum} for studying
the maximum-entropy properties of Poisson distributions.

\begin{fact}[Binomial Distributions Maximize Entropy] \label{fact:maxent}
Let $p \in [0,1]$, and fix a natural number $n$.
Among all ULC probability distributions with mean $pn$
that are supported on $\{0, 1, 2, \dots, n\}$,
the binomial distribution $\textsc{Bin}(p, n)$ has the maximum entropy.
\end{fact}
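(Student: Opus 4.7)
The plan is to combine a standard compactness argument with a semigroup-based entropy monotonicity result, in the spirit of Johnson's approach to the Poisson maximum-entropy theorem, which Yu adapted to the binomial setting. First, I would observe that the set of probability vectors $a = (a_0, \ldots, a_n)$ on $\{0, 1, \ldots, n\}$ satisfying $\sum_j j a_j = pn$ and the nonlinear inequalities $j a_j^2 \geq (j+1) a_{j+1} a_{j-1}$ is closed and bounded in the simplex of $\R^{n+1}$, hence compact. Since Shannon entropy is continuous (with the convention $0 \log 0 := 0$), a maximizer $a^\star$ exists. A short perturbation argument, moving mass between a zero coordinate and its neighbors while respecting the mean and the ULC relations, reduces to the case of full-support maximizers when $p \in (0, 1)$.

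The core of the proof is a monotonicity statement along a one-parameter family $\{T_t\}_{t \in [0,1]}$ of ``binomial-thinning'' transformations acting on ULC distributions of mean $pn$ on $\{0, \ldots, n\}$, constructed so that $T_0$ is the identity, $T_1$ sends every admissible starting distribution to $\textsc{Bin}(p, n)$, and each $T_t$ preserves both the ULC property and the mean. Granting that $t \mapsto H(T_t a)$ is nondecreasing, one concludes $H(a) \leq H(\textsc{Bin}(p, n))$ for every admissible $a$. To prove the monotonicity, I would derive a discrete analogue of the de Bruijn identity expressing $\tfrac{\diff{}}{\diff{t}} H(T_t a)$ as a nonnegative scaled Fisher-information functional at $T_t a$. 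The relevant Fisher information, built from a scaled score such as $\rho_j = (j+1) a_{j+1}/a_j - p(n - j)$, vanishes identically on $\textsc{Bin}(p, n)$ and is controlled, for ULC sequences, by a Cram\'er--Rao-type inequality that uses the defining ULC relations in an essential way.

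The main obstacle is precisely this monotonicity step: one must construct the semigroup so that the ULC property is preserved along the flow, and one must establish the Fisher-information inequality using only the ULC relations. Once that is in hand, the rest is formal: compactness delivers existence, the semigroup identifies the maximizer as $\textsc{Bin}(p, n)$, and uniqueness follows from strict concavity of entropy combined with strict monotonicity at any non-binomial starting point. An alternative, fully variational route would write KKT conditions for $a^\star$ with Lagrange multipliers for the normalization, the mean, and each ULC inequality, and then exhibit an explicit mean-preserving, ULC-feasible perturbation that strictly increases entropy whenever $a^\star \neq \textsc{Bin}(p, n)$; the technical heart is the same, since in either approach the ULC hypothesis is the one ingredient that rules out non-binomial extrema.
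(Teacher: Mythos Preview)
The paper does not supply its own proof of this statement; it records it as a known result of Yu~\cite{Yu08:Maximum-Entropy}, who adapted Johnson's Poisson-case semigroup method~\cite{Joh07:Log-Concavity-Maximum} to the binomial setting. Your proposal is a faithful outline of exactly that argument---a thinning-type semigroup that preserves the mean and the ULC property, combined with entropy monotonicity established through a discrete Fisher information controlled by the ULC relations---and you correctly identify the monotonicity step as the technical heart.
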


These facts are relevant to our discussion because the intrinsic volumes
of a convex body form an ultra-log-concave sequence.

\begin{fact}[Intrinsic Volumes are ULC] \label{fact:ulc}
The normalized intrinsic volumes of a nonempty convex body in $\R^n$
compose a ULC probability distribution supported on $\{0, 1, 2, \dots, n\}$.
\end{fact}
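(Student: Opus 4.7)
The plan is to observe that this fact is essentially immediate from the material already assembled in Section~\ref{sec:ineqs}, together with the elementary observation that normalization by a positive constant preserves ultra-log-concavity. So the proof amounts to quoting~\eqref{eqn:iv-ulc} and checking three simple points.

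First I would recall the definition of ULC just stated: a nonnegative sequence $\{a_j\}$ is ULC provided $j\,a_j^2 \geq (j+1)\,a_{j+1}\,a_{j-1}$ for all $j\geq 1$. The unnormalized intrinsic volume sequence $\{V_j(\mathsf{K}) : j=0,1,\dots,n\}$ satisfies exactly this inequality by~\eqref{eqn:iv-ulc}, which was derived from the Alexandrov--Fenchel inequality for quermassintegrals~\eqref{eqn:qm-lc} via the conversion~\eqref{eqn:quermassintegral} between quermassintegrals and intrinsic volumes. Nonnegativity is property~(1) in Section~\ref{sec:iv-props}.

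Next I would verify that normalizing does no harm. Setting $\tilde V_j := V_j(\mathsf{K})/W(\mathsf{K})$ with $W(\mathsf{K}) > 0$ (since $\mathsf{K}$ is nonempty, so $V_0(\mathsf{K}) = 1$ and $W(\mathsf{K}) \geq 1$), the ULC inequality becomes
\[
j\,\tilde V_j^2 \;=\; \frac{j\,V_j(\mathsf{K})^2}{W(\mathsf{K})^2} \;\geq\; \frac{(j+1)\,V_{j+1}(\mathsf{K})\,V_{j-1}(\mathsf{K})}{W(\mathsf{K})^2} \;=\; (j+1)\,\tilde V_{j+1}\,\tilde V_{j-1},
\]
where both sides have been divided by $W(\mathsf{K})^2 > 0$. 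Finally, $\tilde V_j \geq 0$ and $\sum_{j=0}^n \tilde V_j = 1$ by Definition~\ref{def:wills}, while the support is contained in $\{0,1,\dots,n\}$ because intrinsic volumes $V_j(\mathsf{K})$ are only defined (and nonzero) for $j \in \{0,1,\dots,n\}$ when $\mathsf{K}\subset\R^n$.

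There is essentially no obstacle here: all the heavy lifting was done earlier, in the passage from the Alexandrov--Fenchel inequality to~\eqref{eqn:iv-ulc}, which is where the deep convex geometry enters. The only thing to be a little careful about is that $W(\mathsf{K}) > 0$ so that the normalization is well-defined, but this is immediate from $V_0(\mathsf{K}) = 1$ for nonempty $\mathsf{K}$. The statement is really just a packaging of~\eqref{eqn:iv-ulc} in probabilistic language suitable for invoking Fact~\ref{fact:maxent} in the proof of Theorem~\ref{thm:maxent-intro}.
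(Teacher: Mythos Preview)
Your proposal is correct and matches the paper's treatment: the paper does not give a detailed proof but simply cites the Alexandrov--Fenchel inequalities and the papers of Chevet and McMullen, which is exactly the route you take via~\eqref{eqn:iv-ulc}. Your additional verification that normalization by $W(\mathsf{K})>0$ preserves ULC and yields a probability distribution is the obvious elementary step the paper leaves implicit.
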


This statement is a consequence of the Alexandrov--Fenchel inequalities~\cite[Sec.~7.3]{Sch14:Convex-Bodies};
see the papers of Chevet~\cite{Che76:Processus-Gaussiens} and McMullen~\cite{McM91:Inequalities-Intrinsic}.

\subsection{Proof of Theorem~\ref{thm:maxent-intro}}

With this information at hand, we quickly establish the main result of the section.
Recall that $\mathsf{Q}_n$ denotes the unit-volume cube in $\R^n$. 
Let $\mathsf{K} \subset \R^n$ be a nonempty convex body.
Define the number $p \in [0, 1)$ by the relation $pn = \Delta(\mathsf{K})$.
According to Corollary~\ref{cor:cubes},
the scaled cube $s\mathsf{Q}_n$ satisfies
$$
\Delta(s \mathsf{Q}_n) = pn = \Delta(\mathsf{K})
\quad\text{when}\quad
s = \frac{p}{1 - p}.
$$
Fact~\ref{fact:ulc} ensures that the normalized intrinsic volume sequence
of the convex body $\mathsf{K}$ is a ULC probability distribution
supported on $\{0, 1, 2, \dots, n\}$.  Since $\Expect Z_{\mathsf{K}} = \Delta(\mathsf{K}) = pn$,
Fact~\ref{fact:maxent} now delivers
$$
\mathrm{IntEnt}(\mathsf{K})
	= \mathrm{Ent}[Z_{\mathsf{K}} ]
	\leq \mathrm{Ent}[ \textsc{Bin}(p, n) ]
	= \mathrm{Ent}[ Z_{s\mathsf{Q}_n} ]
	= \mathrm{IntEnt}( s \mathsf{Q}_n ).
$$
We have used Corollary~\ref{cor:cubes} again to see
that $Z_{s \mathsf{Q}_n} \sim \textsc{Bin}(p, n)$.
The remaining identities are simply the definition of the intrinsic entropy.
In other words, the scaled cube has the maximum intrinsic entropy
among all convex bodies that share the same central intrinsic volume.

It remains to show that the unit-volume cube has maximum intrinsic
entropy among \emph{all} convex bodies.
Continuing the analysis in the last display, we find that
$$
\mathrm{IntEnt}(\mathsf{K} ) \leq \mathrm{Ent}[ \textsc{Bin}(p,n) ]
	\leq \mathrm{Ent}[ \textsc{Bin}(1/2, n) ]
	= \mathrm{Ent}[ Z_{\mathsf{Q}_n} ]
	= \mathrm{IntEnt}(\mathsf{Q}_n).
$$
Indeed, among the binomial distributions $\textsc{Bin}(p, n)$ for $p \in [0,1]$,
the maximum entropy distribution is $\textsc{Bin}(1/2, n)$.
But this is the distribution of $Z_{\mathsf{Q}_n}$,
the intrinsic volume random variable of the unit cube $\mathsf{Q}_n$.
This observation implies the remaining claim in Theorem~\ref{thm:maxent-intro}.

\section*{Acknowledgments and Affiliations}

We are grateful to Emmanuel Milman for directing us to
the literature on concentration of information.
Dennis Amelunxen, Sergey Bobkov, and Michel Ledoux
also gave feedback at an early
stage of this project.  Ramon Van Handel provided
valuable comments and citations, including the
fact 
that ULC sequences concentrate.
We thank the anonymous referee for
a careful reading and constructive remarks.

Parts of this research were completed
at Luxembourg University and
at the Institute for Mathematics and its Applications (IMA)
at the University of Minnesota.
Giovanni Peccati is supported by the internal research project STARS (R-AGR-0502-10) at Luxembourg University.
Joel A.~Tropp gratefully acknowledges
support from ONR award N00014-11-1002 and the Gordon \& Betty Moore Foundation.

Martin Lotz (\url{martin.lotz@warwick.ac.uk}) is affiliated with the Mathematics Institute, University of Warwick.
Michael McCoy (\url{mike.mccoy@getcruise.com}) is with Cruise Automation.
Ivan Nourdin (\url{ivan.nourdin@uni.lu}) and Giovanni Peccati (\url{giovanni.peccati@gmail.com}) 
are with the Unit{\'e} de Recherche en Math{\'e}matiques, University of Luxembourg.
Joel A.~Tropp (\url{jtropp@cms.caltech.edu}) is with the Department of Computing \& Mathematical Sciences,
California Institute of Technology.

\bibliographystyle{myalpha}

\end{document}